\title{Checkerboard graph monodromies}
\author{S.~Baader, L.~Lewark, L.~Liechti}
\let\cref\Cref
\newcommand{\myemail}[1]{\texttt{\href{mailto:#1}{#1}}}
\theoremstyle{plain}
\newtheorem{theorem}{Theorem}
\newtheorem{corollary}[theorem]{Corollary}
\newtheorem{proposition}[theorem]{Proposition}
\newtheorem{lemma}[theorem]{Lemma}
\newtheorem{problem}[theorem]{Problem}
\theoremstyle{definition}
\newtheorem*{definition}{Definition}
\theoremstyle{remark}
\newtheorem*{remark}{Remark}
\def\R{{\mathbb R}}
\begin{document}

\begin{abstract} We associate an open book with any connected plane checkerboard graph, thus providing a common extension of the classes of prime positive braid links and positive tree-like Hopf plumbings. As an application, we prove that the link type of a prime positive braid closure is determined by the linking graph associated with that braid.
\end{abstract}

\maketitle

\section{Introduction}
Braid groups play an important role at the interface between geometry, topology and algebra. While being special cases of mapping class groups, braid groups allow for a complete formulation of knot theory, via their geometric realisation in the 3-sphere \cite{Bi}. Our main objects of interest are positive braids, which form a monoid that captures the essence of braids while being relatively small. It is big in that every braid can be written as a product of a central element with a positive braid, e.g.\ by the Garside form \cite{Ga}. It is small in that closures of positive braids -- positive braid links -- share very special features with algebraic links, which they include \cite{Ei}.
One of these features is that positive braid links bound canonical genus-minimising Seifert surfaces \cite{Sta}, which happen to be fibre surfaces in the case of non-split braids. These surfaces can be constructed as unions of discs and twisted ribbons by the well-known Seifert algorithm \cite{Sei}.
They serve as a starting point for a graph theoretical model for positive braid links, which is the main topic of this article, and which we now describe in a slightly informal way.
\begin{figure}[h]
\begin{center}
\def\svgwidth{200pt}
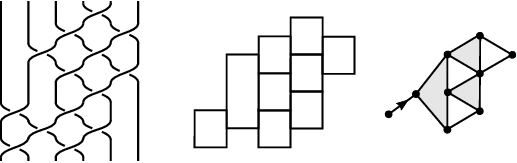
\caption{A positive braid, its brick diagram and linking graph. Regions are coloured black/white to indicate clockwise/anticlockwise orientation of the boundary cycle.}
\label{fig:1}
\end{center}
\end{figure}

A positive braid can be encoded in a plane graph with vertical lines and horizontal edges, called brick diagram, see \cref{fig:1}.
The number of bricks, i.e.\ innermost rectangles, equals the first Betti number of the Seifert surface. Since all crossings are positive, we can reconstruct a braid word from its brick diagram. The linking graph of a brick diagram is a subgraph of its dual graph, where all edges corresponding to non-linked bricks are deleted. Here a pair of adjacent bricks is non-linked if their intersection is contained in the interior of a side of one of the bricks, see again \cref{fig:1}.

Linking graphs come with a natural orientation consistent with a checkerboard colouring (see \cref{defs} and \cref{fig:1,fig:2}). Throughout this paper, the term linking graph refers to the embedded linking graph together with its orientation. The isotopy type of the linking graph does not determine the original braid word, as demonstrated by the pair of positive braids $\sigma_1^3$ and $\sigma_1 \sigma_2 \sigma_1 \sigma_2$, whose linking graph is a single edge. However, these two braids have isotopic closures: the positive trefoil knot. This is a special case of our main result.

\begin{theorem} \label{braidthm} The linking graph of a prime positive braid word~$\beta$ determines the oriented link type of the closure of~$\beta$.
\end{theorem}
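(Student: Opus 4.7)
The plan is to apply the open book construction $G \mapsto (\Sigma(G), L(G))$ announced in the abstract, which assigns to every connected plane checkerboard graph $G$ a fibred Seifert surface $\Sigma(G) \subset S^3$ with boundary link $L(G)$. With this construction in hand, the theorem reduces to establishing that when $G$ is the oriented linking graph $G_\beta$ of a prime positive braid word $\beta$, the link $L(G_\beta)$ is isotopic to the oriented closure $\widehat{\beta}$.

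First, I would describe $\Sigma(G)$ explicitly as an iterated plumbing of positive Hopf bands, with one band per vertex of $G$, plumbed to its neighbours along arcs dictated by the embedding of $G$ in $S^2$ and by the checkerboard orientation (which fixes the sense of each plumbing). Iterated Hopf plumbing preserves fibredness, so this yields a fibred surface whose oriented boundary link depends only on the embedded oriented graph $G$.

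The central step is the identification of $\Sigma(G_\beta)$ with the canonical Seifert surface $\Sigma_\beta$ of the closure $\widehat\beta$. By the Seifert algorithm, $\Sigma_\beta$ is itself a plumbing of positive Hopf bands, one per brick of the brick diagram of $\beta$, but with plumbings recorded for every adjacent pair of bricks, whether linked or not. For a non-linked pair the corresponding bands are plumbed along arcs lying in the interior of a common side of one brick, and my plan is to show that each such plumbing can be unplumbed by an ambient isotopy of $S^3$ which leaves the oriented boundary unchanged. After processing all non-linked pairs in this way, the residual plumbing pattern should be exactly the oriented linking graph $G_\beta$, exhibiting an ambient isotopy between $\Sigma_\beta$ and $\Sigma(G_\beta)$.

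I expect the principal technical obstacle to lie in this reduction: verifying that the local unplumbing at each non-linked adjacency can be performed without interfering with neighbouring plumbings, and that the resulting surface truly agrees with $\Sigma(G_\beta)$, together with its orientation. The primeness hypothesis on $\beta$ enters here to rule out separating configurations in the brick diagram that could otherwise obstruct assembling the local moves into a single global isotopy. Once the identification $L(G_\beta) \simeq \widehat\beta$ is established, the theorem follows at once, since two prime positive braid words sharing the oriented linking graph $G$ produce closures both isotopic to the canonical link $L(G)$.
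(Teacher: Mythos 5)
Your proposal locates the difficulty in the wrong place, and the mechanism you propose for the central step does not exist. First, the ``unplumbing of non-linked adjacencies'' is a red herring: in the canonical Seifert surface of a positive braid word, the Hopf bands of two adjacent but non-linked bricks have disjoint core curves and are not plumbed to one another, so there is nothing to unplumb. More tellingly, the simplest instance of the theorem --- $\sigma_1^3$ versus $\sigma_1\sigma_2\sigma_1\sigma_2$, both with linking graph a single edge --- involves no non-linked adjacencies whatsoever, yet the two canonical Seifert surfaces sit in $S^3$ completely differently; your reduction leaves them untouched and offers no way to relate them. Second, your surface $\Sigma(G)$ is not well defined as an embedded object: ``iterated plumbing of Hopf bands along $G$'' makes sense for trees, but linking graphs generically contain cycles (e.g.\ for torus links), and there is no canonical way to plumb bands in a cycle in $S^3$. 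So the claim that ``the oriented boundary link depends only on the embedded oriented graph'' is exactly what needs proving, and your construction does not provide it.

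The paper's route avoids ambient isotopy in $S^3$ entirely. It builds from $\Gamma$ an \emph{abstract} surface with a curve system (annuli glued along rectangles, discs capping the checkerboard regions) and, crucially, a \emph{monodromy}: a product of positive Dehn twists along those curves, one per vertex. The key lemma --- entirely absent from your proposal --- is that the product order is determined by $\Gamma$ up to conjugation (\cref{uniqueconjugacyclass}, proved via Shi's correspondence between enumerations and acyclic orientations and Pretzel's push-down moves). An abstract open book determines the fibred link, so it then suffices to identify the open book of $\widehat{\beta}$ with the checkerboard open book of $G_\beta$; this identification is combinatorial (both curve systems fill their surfaces, have intersection pattern and cyclic orders given by $G_\beta$, and complementary discs matching the bounded regions), not an ambient isotopy. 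Without the monodromy and the well-definedness of its conjugacy class, a fibre surface together with a curve system does not pin down the link, so your argument has a genuine gap even granting the surface identification.
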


It is worth noting that the two standard braid representatives of the torus link $T(p,q)$, $(\sigma_1 \sigma_2 \ldots \sigma_{p-1})^q$ and $(\sigma_1 \sigma_2 \ldots \sigma_{q-1})^p$, have very similar linking graphs: one is the mirror of the other. What is more intriguing, both graphs are realised by precisely one more braid word:
\begin{align*}(\sigma_p \sigma_{p+1} \cdots \sigma_{p+q-1})(\sigma_{p-1} \sigma_p \cdots \sigma_{p+q-2}) \cdots (\sigma_1 \sigma_2 \cdots \sigma_q),\\
(\sigma_q \sigma_{q+1} \cdots \sigma_{q+p-1})(\sigma_{q-1} \sigma_q \cdots \sigma_{q+p-2}) \cdots (\sigma_1 \sigma_2 \cdots \sigma_p).
\end{align*}
The former is depicted in \cref{fig:2} for $(p,q)=(4,5)$. 
It is not too difficult to check that these braid words indeed represent the same torus link. In fact, some linking graphs are realised by many different braid words, for example trees.

\begin{figure}[h]
\begin{center}
\def\svgwidth{200pt}
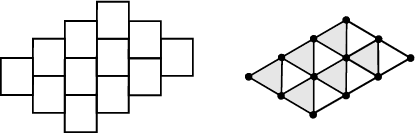
\caption{An alternative diagram of $T(4,5)$.}
\label{fig:2}
\end{center}
\end{figure}

The way of proving \cref{braidthm} is by reconstructing the monodromy of a prime positive braid link from its linking graph. With a connected linking graph, we will associate an abstract surface $\Sigma$ with boundary together with a homeomorphism $\varphi: \Sigma \to \Sigma$. Such a pair $(\Sigma,\varphi)$ is called an \emph{open book} and determines a fibred link in a 3-manifold. Our description makes crucial use of the bipartite nature of the dual graphs of linking graphs. As we will see, the construction of the open book $(\Sigma,\varphi)$ extends to all connected \emph{checkerboard graphs}, i.e.\ finite simple connected plane oriented graphs whose bounded regions carry a checkerboard colouring. The orientation of an induced boundary cycle of a black or white face is required to be in the clockwise or anticlockwise sense, respectively.
Furthermore, we need an additional technical assumption, which is discussed in \cref{defs}.
As a consequence, we obtain a natural extension of the class of prime positive braid links.

\begin{theorem} \label{graphthm} Every connected checkerboard graph $\Gamma$ determines a unique open book $(\Sigma,\varphi)$ of a strongly quasipositive fibred link $L$ in $S^3$. If $\Gamma$ is the linking graph of a positive braid word $\beta$, then $L$ is isotopic to the closure of $\beta$.
\end{theorem}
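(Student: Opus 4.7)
The plan is to proceed in three stages: first, construct the page $\Sigma$ and monodromy $\varphi$ from $\Gamma$; second, verify that $(\Sigma,\varphi)$ is an open book of a strongly quasipositive fibred link in $S^3$; third, match the construction with the braid-theoretic one when $\Gamma$ is a linking graph.

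For the construction, I would assemble $\Sigma$ as an abstract oriented surface with boundary from the combinatorial data of $\Gamma$, built out of discs and positively framed bands indexed by a specified set of faces, vertices, and edges of $\Gamma$. The checkerboard orientation convention on $\Gamma$ ensures that the bands can be attached with compatible positive framings, so that $\Sigma$ carries a canonical planar embedding and hence sits naturally in $S^3$. The monodromy $\varphi$ is then defined as an ordered product of right-handed Dehn twists along a canonical family of simple closed curves on $\Sigma$, indexed by the faces of $\Gamma$ of one colour; each such curve bounds an embedded disc in $S^3$.

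Strong quasipositivity of the resulting link $L$ should then follow because this realisation exhibits $(\Sigma,\varphi)$ as a disc with iterated positive Hopf plumbings, which by a well-known characterisation yields a strongly quasipositive fibred link in $S^3$. For the uniqueness statement, I would show that any two admissible orderings of the twist curves differ only by commutations of disjoint twists and Hurwitz-type moves that preserve the underlying open book; the technical assumption of \cref{defs} should be designed precisely to guarantee the required disjointness.

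For the last stage, suppose $\Gamma$ is the linking graph of a prime positive braid word $\beta$. I would first verify that the Seifert surface produced by Seifert's algorithm on the closure of $\beta$ is homeomorphic to the abstract $\Sigma$ built from $\Gamma$, reading off the identification directly from the brick diagram. Then I would match the classical product-of-Dehn-twists description of the monodromy of a positive braid closure with the intrinsic product $\varphi$. I expect this last matching step to be the main obstacle: the linear ordering of Dehn twists coming from reading off a braid word must be reconciled with the a priori unordered family of twists associated to $\Gamma$. The bipartite structure of the dual graph, together with the primeness hypothesis (which rules out separating configurations that would make the matching ambiguous) and the technical assumption of \cref{defs}, should deliver exactly the disjointness among twist curves needed to collapse the different orderings to a single equivalence class.
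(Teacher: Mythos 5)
Your overall architecture (build the page and monodromy, exhibit the open book as an iterated positive Hopf plumbing, then match with the braid monodromy) parallels the paper's, but two of your key steps rest on misreadings of the construction and would fail as stated. First, the twist curves are not indexed by the faces of one colour: the construction takes one annulus $A_i$ per \emph{vertex} of $\Gamma$, glues annuli along rectangles corresponding to the \emph{edges}, caps off with one disc per bounded \emph{face}, and twists along the core curves $\gamma_i$ of the annuli. These core curves correspond to the bricks (embedded Hopf bands) of the brick diagram, and curves attached to adjacent vertices intersect once. Consequently your uniqueness argument cannot work: the admissible orderings of the twists are not related merely by commutations of disjoint twists plus Hurwitz-type moves, because adjacent twists genuinely fail to commute. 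The paper's \cref{uniqueconjugacyclass} is the real content here: admissible enumerations correspond (via Shi) to acyclic orientations of $\Gamma$ obtained by reversing a suitable set of edges, any two such orientations have equal flow difference, hence (by Pretzel) are related by pushing down maximal vertices, and pushing down a sink corresponds exactly to conjugating the Coxeter word by its last letter. Relatedly, you have misidentified the role of the technical assumption of \cref{defs}: it guarantees the \emph{existence} of an acyclic orientation compatible with the checkerboard colouring (hence of an admissible enumeration), not any disjointness of twist curves.

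Second, for strong quasipositivity you assert that the realisation ``exhibits $(\Sigma,\varphi)$ as a disc with iterated positive Hopf plumbings'' but give no induction. The argument requires finding, in every connected checkerboard graph, a vertex $v$ adjacent to the unbounded region such that $\Gamma\setminus v$ is still a connected checkerboard graph, and checking that re-attaching $A_v$ (after cancelling the extra 1- and 2-handles) amounts to a single positive stabilisation; this is where the plumbing structure, the fact that the ambient manifold is $S^3$, and strong quasipositivity (via Rudolph) actually come from. Finally, in the linking-graph case the identification of $\Sigma_\beta$ with $\Sigma_\Gamma$ is not read off ``directly from the brick diagram'': one must match the two configurations of curves (intersection points are in bijection with edges of $\Gamma$, the cyclic order of intersections along each curve agrees with the planar cyclic order of edges at the corresponding vertex, both configurations fill their surfaces, and the complementary discs correspond to bounded faces with the correct handedness), and only then invoke \cref{uniqueconjugacyclass} to reconcile the twist orders. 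Primeness enters only to guarantee connectivity of the linking graph and fibredness via Stallings, not to resolve ordering ambiguities.
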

Here a strongly quasipositive fibred link is a fibred link whose corresponding open book supports the unique tight contact structure on $S^3$~\cite{He}.
The class of links associated with connected checkerboard graphs also includes the class of weight two arborescent links, i.e.\ links arising as the boundary of a plumbing of positive Hopf bands along plane trees.
Indeed, we will see that for $\Gamma$ a plane tree with a certain orientation,
the corresponding fibred link $L$ coincides with the fibred arborescent link obtained by plumbing positive Hopf bands along the tree, as described in~\cite{BS}.

The next section makes the definition of checkerboard graphs and their relation to positive braids precise.
In order to associate an open book with connected checkerboard graphs, we need to single out a conjugacy class of Coxeter elements in certain Artin groups. This is done in \cref{Cox} and is based on an extension of Steinberg's Lemma for trees to checkerboard graphs~\cite{Ste}.
The construction of open books is then carried out in \cref{checkopenbooks},
and \cref{sec:thm2} contains the core of the proof of \cref{graphthm}.
We observe that \cref{graphthm} implies \cref{braidthm}.
\Cref{sec:orientation} discusses the significance of the orientation of checkerboard graphs, in particular for trees.
In \cref{sec:table}, we exhibit an example of a checkerboard graph whose associated knot is neither the closure of a positive braid, nor a weight two arborescent knot.
For this purpose, we discuss how to recover the Seifert matrix of a knot from its checkerboard graph, and we present a list of all weight two arborescent knots and positive braid knots of genus five or less.
The paper closes with a list of problems and questions in \cref{sec:perspectives}.
\medskip

\paragraph{\emph{Acknowledgements:}} 
We thank Filip Misev for explaining us the fibre surface of a cycle,
Peter Feller for confusing us with fake cycles,
Francis Bonahon for discussing the symmetries of trees,
and Pierre Dehornoy for pointing out that linking graphs admit a checkerboard colouring.
\vspace{-2ex}

\setcounter{tocdepth}{1}
\tableofcontents
\section{Brick diagrams, linking and checkerboard graphs}
\label{defs}

A \emph{positive braid word} is a finite product of positive generators
$\sigma_1$, $\sigma_2,\ldots,\sigma_{n-1}$ of the braid group $B_n$ on $n$ strands. We may think of a positive braid word (up to commuting non-adjacent braid generators) as a brick diagram, i.e.\ a plane graph with $n$ vertical lines connected by horizontal arcs, one for each crossing. Brick diagrams are special cases of fence diagrams for strongly quasi-positive braids, as introduced by Rudolph~\cite{Ru}. They naturally embed as retracts into the canonical Seifert surface associated with the closure of positive braids. The bricks thereby correspond to embedded positive Hopf bands, whose core curves form a basis for the first homology group of the Seifert surface. We call two bricks linked if the intersection number of their core curves is non-zero. This happens precisely if they are arranged in the same pattern as the two bricks of the braids $\sigma_i^3$, $\sigma_i \sigma_{i+1} \sigma_i \sigma_{i+1}$, $\sigma_{i+1} \sigma_i \sigma_{i+1} \sigma_i$. The linking graph associated with a positive braid word is a subgraph of the plane dual graph of the brick diagram, with one vertex for each brick and one edge for each pair of linked bricks, see \cref{fig:1,fig:2,fig:3}.
This can be seen as a generalisation of Dynkin diagrams for links of plane curve singularities, compare \cite[Ex.~1]{Lo}.

As mentioned in the introduction, the plane isotopy type of the linking graph does not determine the original braid word. If the linking graph is not connected, it does not even determine the link type of the positive braid's closure. For example, the two braid words $\sigma_1 \sigma_2 \sigma_2 \sigma_1 \in B_3$, $\sigma_1 \sigma_1 \sigma_3 \sigma_3 \in B_4$ define two non-isotopic positive braid links (one of which is split) with the same linking graph: two points.
Restricting oneself to non-split positive braids does not alleviate this, since e.g.\ $\sigma_1^4 \sigma_2^2 \sigma_3^2$ and $\sigma_1^2 \sigma_2^4 \sigma_3^2$ define two non-isotopic non-split positive braid links, but have the same linking graph (a disjoint union of two isolated vertices and a path of length two).
We will therefore only deal with positive braid words whose closure is prime. These are precisely those positive braid words with a connected linking graph, since positive braids are visually prime \cite{Cro}.
By a theorem of Stallings~\cite{Sta}, this condition guarantees that the canonical Seifert surface of the braid closure is a fibre surface.

An important feature of linking graphs is that they come with a checkerboard colouring, i.e.\ a black-and-white colouring of their bounded regions, 
with alternating colours at all internal edges. Indeed, all regions of a linking graph are bounded by cycles that look like triangles with a distinguished vertex on the left or right. 
We colour these black and white, respectively. Triangles sharing an edge are of different type, compare \cref{fig:1,fig:2}.
There are three types of edges in a linking graph: vertical edges, edges with positive slope and edges with negative slope. We orient vertical edges downwards, and the other two types upwards (see \cref{fig:3}): in this way,
the boundary cycles of black and white regions are oriented in the clockwise and anticlockwise sense, respectively. Moreover, an edge corresponds to a non-trivial intersection of the
two homology classes associated with its endpoints -- and this orientation reflects the sign of that intersection. This will be more fully discussed in \cref{checkopenbooks,sec:thm2}.
\begin{figure}[t]
\begin{center}
\def\svgwidth{115pt}
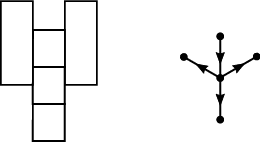
\caption{An intersection tree.}
\label{fig:3}
\end{center}
\end{figure} 

Linking graphs are the motivation for the following definition of checkerboard graphs.
\begin{definition}
A \emph{checkerboard graph} is a finite simple oriented plane graph satisfying two conditions:
\begin{itemize}
\item Every bounded region has a coherently oriented induced cycle as boundary.
\item There exists a set of edges that contains exactly one edge out of the
boundary of every bounded region, and contains at least one, but not all, edges out of any oriented cycle.
\end{itemize}
\end{definition}
Some comments on this definition are in order.
The set of edges in the second condition is not part of the data -- its existence is required,
but no choice of a specific set is made. Note that the second part of the second condition is equivalent to the requirement that reversing the orientation
of the selected edges yields an acyclic graph. The first condition leaves the orientation of bridges free;
on all other edges, the first condition means precisely that the orientation is induced by a checkerboard colouring
of the bounded regions, where the boundaries of black and white regions are oriented in the clockwise and anticlockwise sense, respectively.

A linking graph satisfies both conditions, and is thus a special case of a checkerboard graph.
For the second condition, one may select e.g.\ all edges with positive slope. Each of the triangle-shaped regions
contains exactly one of those edges, and reversing the orientation of all those edges means that every non-vertical edge is oriented towards the left -- so there can be no oriented cycles.

Another special case of a checkerboard graph is an embedded tree. It satisfies both conditions, no matter how the edges are oriented,
and so we call oriented plane trees \emph{checkerboard trees}.
Some plane trees arise as linking graphs of positive braid words (for example a star with four edges, obtained from $\sigma_2^2 \sigma_1 \sigma_3 \sigma_2^2 \sigma_1 \sigma_3$, see \cref{fig:3}), 
but not all (for example a star with five or more edges). In any case, the degree of a vertex of a linking graph cannot exceed six. 

In the next two sections, we will associate an open book with any connected checkerboard graph.

\section{Coxeter elements for checkerboard graphs}
\label{Cox}
Let $\Gamma$ be a checkerboard graph. 
We consider the \emph{right-angled Artin group} $A(\Gamma)$ defined by $\Gamma$, i.e.\ the group given by the following presentation.
There is one generator for every vertex of $\Gamma$, and two generators commute if and only if there is no edge connecting the corresponding vertices in $\Gamma$. 
There are no other relations.
We are interested in elements of $A(\Gamma)$ which are represented by words in which every generator appears exactly once (and to the power $1$). 
Such elements have been widely studied in the context of Coxeter groups, and are called \emph{Coxeter elements}.
We will call the corresponding words \emph{Coxeter words}.
The remainder of this section is devoted to the proof of the following lemma,
which associates a unique conjugacy class of Coxeter elements with each checkerboard graph.

\begin{lemma}
\label{uniqueconjugacyclass}
There is an enumeration $v_1, \ldots, v_n$ of the vertices of a given checkerboard graph $\Gamma$
that traverses the vertices on the boundary cycle of any black (or white) region of $\Gamma$ in clockwise (or anticlockwise) order.
The graph $\Gamma$ determines the Coxeter element $v_1 v_2 \cdots v_n \in A(\Gamma)$ uniquely, up to conjugation.
\end{lemma}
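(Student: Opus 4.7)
The plan is to prove existence of such an enumeration by topological sorting and uniqueness of the Coxeter element up to conjugation by a source-to-sink argument, extending Steinberg's classical treatment of trees.

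For \textbf{existence}, I would invoke the second axiom to obtain a set $S$ of edges such that $\Gamma_S$---the graph $\Gamma$ with all edges in $S$ reversed---is acyclic and contains exactly one edge from each bounded region's boundary. A topological sort of $\Gamma_S$ produces an enumeration $v_1, \dots, v_n$. For a black bounded region $R$ with clockwise boundary $u_1 \to u_2 \to \cdots \to u_k \to u_1$, the unique edge of $\partial R$ in $S$, say $u_j \to u_{j+1}$, gets reversed in $\Gamma_S$; the remaining directed path $u_{j+1} \to u_{j+2} \to \cdots \to u_j$ forces the topological order to be $u_{j+1} < u_{j+2} < \cdots < u_k < u_1 < \cdots < u_j$, a cyclic shift of the clockwise sequence. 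White regions are handled symmetrically.

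For \textbf{uniqueness}, two topological sorts of the same $\Gamma_S$ differ by swaps of pairs of vertices incomparable in $\Gamma_S$; such pairs cannot share an edge of $\Gamma$ (any edge is a directed path of length one), so the corresponding generators of $A(\Gamma)$ commute and produce equal Coxeter elements. To compare different $S$'s I rely on source-to-sink operations: if $v$ is a source of $\Gamma_S$, then every $\Gamma$-edge into $v$ must lie in $S$ and every $\Gamma$-edge out of $v$ must lie outside $S$. Setting $S' := S \triangle E(v)$ converts $v$ into a sink of $\Gamma_{S'}$, preserving acyclicity; and for every bounded region $R$ through $v$ it replaces the unique $S$-edge on $\partial R$ (the in-edge at $v$) by the out-edge at $v$ along $\partial R$, maintaining ``exactly one per region''. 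The associated Coxeter element changes from $v \cdot g$ to $g \cdot v = v^{-1}(v g) v$, a conjugation.

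The remaining and \textbf{main obstacle} is to show that any two valid sets $S, S'$ are connected by a sequence of such source-to-sink operations. The plan is to induct on $|V(\Gamma)|$: use source-to-sink moves on $\Gamma_S$ to arrange that some vertex $v$ is a common source of $\Gamma_S$ and $\Gamma_{S'}$, then delete $v$ and apply the inductive hypothesis to the smaller checkerboard graph $\Gamma \setminus \{v\}$, on which $S \setminus E(v)$ and $S' \setminus E(v)$ restrict to valid selections. The tree base case reduces to Steinberg's classical lemma. The delicate points are (i) transporting a source along a path in $\Gamma$ by successive source-to-sink moves while respecting the regional constraints, and (ii) checking that vertex deletion preserves the checkerboard axioms, since neighbouring bounded regions may merge into a single larger region that must remain a coherently oriented cycle with a valid edge selection. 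The rigidity imposed by ``exactly one edge per region'' is what makes these checks succeed.
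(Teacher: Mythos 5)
Your existence argument and your analysis of the local move (a source-to-sink flip at $v$ changes the selected edge on each incident region from the in-edge to the out-edge at $v$, preserves acyclicity and the one-edge-per-region property, and conjugates the Coxeter element by $v$) coincide with the paper's proof, which phrases the same thing as ``pushing down a maximal vertex'' and uses Shi's correspondence between acyclic orientations and enumerations up to shuffling. The difference, and the problem, is the step you yourself flag as the main obstacle: showing that any two admissible acyclic orientations are connected by these moves. The paper does not attempt an induction on vertices; it invokes Pretzel's theorem (\emph{On reorienting graphs by pushing down maximal vertices}, Order 3 (1986)), which says that two acyclic orientations of a graph are related by such moves if and only if they have the same \emph{flow difference} on every cycle. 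The flow difference is linear, hence determined by its values on the cycle basis given by the boundaries of the bounded regions, and on each such boundary both of your orientations have flow difference equal to (length $-\,2$), since each reverses exactly one edge of the coherently oriented cycle. That closes the argument in two lines.

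Your proposed induction, as sketched, does not close it. Deleting a common source $v$ and applying the inductive hypothesis to $\Gamma\setminus\{v\}$ runs into the lifting problem: a move in $\Gamma\setminus\{v\}$ at a vertex $w$ adjacent to $v$ need not be a legal move in $\Gamma$, because $w$ retains the incoming edge $v\to w$ and so is not a source of $\Gamma_S$ even when it is a source of the restricted orientation. Moreover, deleting $v$ merges the bounded regions incident to $v$ into the unbounded region (or, for an interior $v$, into a region whose boundary need not be an induced cycle), so ``$S\setminus E(v)$ restricts to a valid selection'' is not automatic, and neither is the claim that two arbitrary admissible orientations can be brought to have a \emph{common} source by legal moves. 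These are exactly the difficulties that Pretzel's result is designed to absorb. I would either cite Pretzel and verify the flow-difference invariant as above, or supply full proofs of points (i) and (ii) of your plan; as written, the uniqueness half of the lemma is not proved. (Also note that an induction on $|V(\Gamma)|$ has a single vertex, not a tree, as its base case, so the appeal to Steinberg's lemma for trees does not fit the structure of your induction.)
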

As we will see, this enumeration is reflected in the product order of positive Dehn twists in a positive braid monodromy.
\begin{proof}
The main tool for the proof is a one-to-one correspondence (due to Shi~\cite{Shi}) between acyclic orientations on $\Gamma$
(not to be confused with the given checkerboard orientation on $\Gamma$)
and enumerations $v_1, \ldots, v_n$ of the vertices of $\Gamma$, up to \emph{shuffling},
i.e.\ switching the indices of $v_i$ and $v_{i+1}$, if there is no edge between them.
The correspondence is defined as follows:
given an enumeration, we orient an edge towards its endpoint of higher index. Note that this
does give an acyclic orientation, and shuffling the enumeration does not change the resulting orientation.
Conversely, with an acyclic orientation, we associate the following enumeration:
start the enumeration with all sources, in any order. Remove the sources, and continue inductively with the sources of the remaining graph.  Here, we use that any acyclicly oriented finite graph has at least one source.
Note that at any stage, no two sources are connected by an edge; so enumerating them in a different order
just gives a shuffled enumeration. One easily checks that these two assignments are mutually inverse.

Now, let us prove the existence of the enumeration as claimed.
By the second condition of the definition of a checkerboard graph, we may select a set of edges
containing exactly one edge out of the boundary of every bounded region and at least one, but not all edges of every oriented cycle. 
Then, starting from the checkerboard orientation of $\Gamma$, one can reverse the orientation of every selected edge.
This yields an acyclic orientation, and we claim that the associated enumeration of vertices has the desired property.
To see this, consider a region of $\Gamma$.
Its boundary is an induced cycle $C$. The acyclic orientation disagrees with the checkerboard orientation of $\Gamma$ for exactly one of the edges in $C$, so that $C$ has one sink, and one source. The source must come first in the enumeration of vertices; then all the other vertices must follow in clockwise or anticlockwise order, for a black or white region, respectively. This is precisely the condition
imposed on the enumeration.
\begin{figure}[t]
\def\svgwidth{110pt}
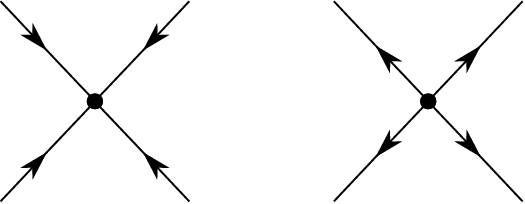
 \caption{Pushing down a maximal vertex in an acyclic orientation yields another acyclic orientation.}
 \label{pushdown}
\end{figure}

Finally, let us show that all such enumerations define the same Coxeter element $v_1 v_2 \cdots v_n \in A(\Gamma)$, up to conjugation.
Note that the associated acyclic orientation $\mathfrak{o}$ gives the enumeration back, up to shuffling. Moreover, shuffled enumerations
define the same Coxeter element. In this way, we can associate Coxeter elements with acyclic orientations.
Suppose we are given another enumeration satisfying the condition of the lemma, and denote its associated acyclic orientation by $\mathfrak{p}$.
We now have to show that the Coxeter elements associated with $\mathfrak{o}$ and $\mathfrak{p}$ are conjugate.

For this purpose, we consider a local move transforming one acyclic orientation of $\Gamma$ into another,
called \emph{pushing down a maximal vertex}. By this, we mean reversing all the edge orientations around a maximal vertex,
i.e.\ a sink, see \cref{pushdown}. Suppose $\mathfrak{o}'$ is obtained from $\mathfrak{o}$ by pushing down $v$.
Since $v$ is a sink, we may assume after shuffling that the enumeration $v_1, \ldots, v_n$ associated with $\mathfrak{o}$ satisfies $v = v_n$.
Then $v_n, v_1, \ldots, v_{n-1}$ is an enumeration associated with $\mathfrak{o}'$. So the Coxeter elements associated
with $\mathfrak{o}$ and $\mathfrak{o}'$ are conjugate.

Thus it remains to prove that $\mathfrak{o}$ and $\mathfrak{p}$ are related by a sequence of pushing down maximal vertices.
For that, it is sufficient that $\mathfrak{o}$ and $\mathfrak{p}$ have the same flow difference, as proven by Pretzel~\cite{Pretzel}.
Here, the \emph{flow difference} of an orientation assigns to each oriented cycle in $\Gamma$
the number of edges traversed in the positive sense (with respect to the orientation) 
minus the number of edges traversed in the negative sense (with respect to the orientation). 
Note that due to its linearity properties, the flow difference is determined by its values on a cycle basis of $\Gamma$. 
A natural cycle basis is given by the boundary cycles of the bounded regions, and the condition the enumerations satisfy
immediately implies that they have the same flow differences:
each clockwise or anticlockwise boundary cycle around a black or white region, respectively, has flow
difference equal to its length minus 2.
\end{proof}
\section{Checkerboard open books}
\label{checkopenbooks}
The goal of this section is to associate an open book with each connected checkerboard graph.
Furthermore, if we start with the linking graph of a positive braid, we wish to obtain the open book associated with the fibre structure of the complement of its closure.

\subsection{Constructing the surface and the twist curves}
Let $\Gamma$ be a connected checkerboard graph. For each vertex $v_i$ of $\Gamma$, let $A_i$ be an oriented annulus and let $\gamma_i\subset A_i$ be a core curve of $A_i$, oriented in the anticlockwise sense. 
We construct the surface $\Sigma$ by suitably gluing the annuli~$A_i$.

For every vertex $v$ of $\Gamma$, the planar embedding of $\Gamma$ determines a circular ordering of the incident edges.
We glue two annuli $A_i$ and $A_j$ along a rectangle $R_k$ if their corresponding vertices $v_i$ and $v_j$, respectively, are connected by an edge $e_k$ in~$\Gamma$. 
The rectangle $R_k$ is taken so that its four edges alternatingly belong to the boundary of $A_i$ and~$A_j$.
Furthermore, we want our gluing to respect the circular orderings induced by the planar embedding of~$\Gamma$. 
More precisely, let $v_i$ be a vertex of degree $\ell$ and let $e_{k_1}e_{k_2}\cdots e_{k_{\ell}}$ be the circular ordering of the incident edges.
Then, the circular ordering of the gluing rectangles on $A_i$ should read $R_{k_1}R_{k_2}\cdots R_{k_{\ell}}$, see \cref{circularordering} for an example.
\begin{figure}[t]
\def\svgwidth{240pt}
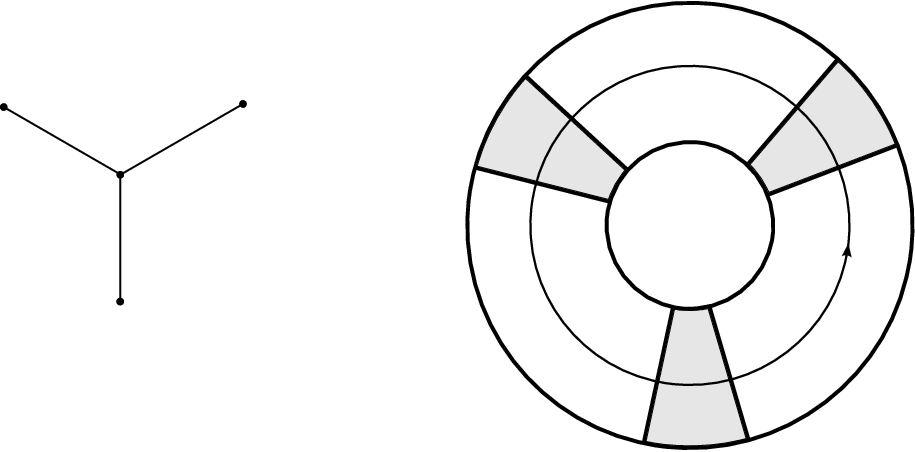
\caption{The embedding into the plane determines a circular ordering of edges around a vertex, which is the circular order in which the annuli are glued.}
\label{circularordering}
\end{figure}
\begin{figure}[t]
\def\svgwidth{190pt}
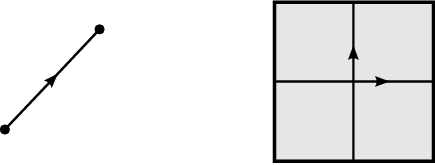
\caption{The orientation of an edge determines in which of the two possible ways two annuli are glued.}
\label{positiveintersection}
\end{figure}

A priori, there are two possibilities for two annuli $A_i$ and $A_j$ to be glued together along a rectangle~$R_k$. Either the intersection of the corresponding core curves 
$\gamma_i$ and $\gamma_j$ is positively or negatively oriented.
In order to determine how we glue, we make use of the orientation of~$\Gamma$. 
If $e_k$ is an oriented edge starting at $v_i$ and ending at $v_j$, we choose the core curve $\gamma_i$ to intersect the core curve $\gamma_j$ positively,
as shown in \cref{positiveintersection}.

\begin{figure}[t]
\def\svgwidth{250pt}
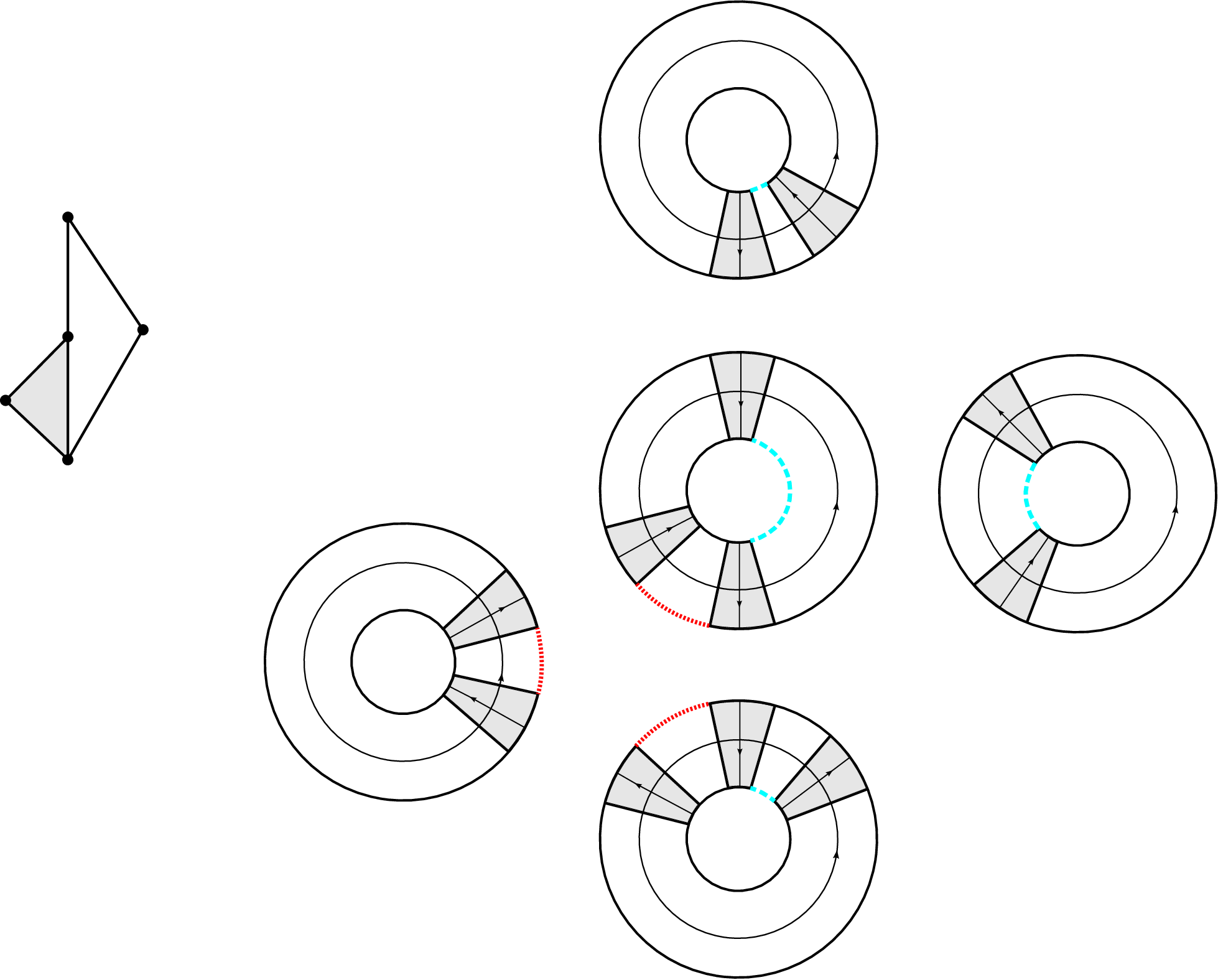
\caption{A checkerboard graph $\Gamma$ and the corresponding abstract surface $\Sigma$.
Discs are glued in along the red (fine dashed) and the blue (coarse dashed) boundary components.}
\label{surfaceexample}
\end{figure}
As a last step, for each coloured region of $\Gamma$, we glue one disc along the boundary of the surface we obtained so far.
More precisely, for a white region, we glue a disc along the inner boundary of the annuli corresponding to the vertices on the boundary of the region, 
for a black region, we glue a disc along the outer boundary. This is shown for an example in \cref{surfaceexample}.

\subsection{Choosing the twist order} \label{subsec:twistorder}
So far we have constructed a surface $\Sigma$ and a collection of simple closed curves $\gamma_i$ from a connected checkerboard graph~$\Gamma$. 
In order to define an open book, we also have to specify a mapping class (up to conjugation) on~$\Sigma$. 
We want to define the mapping class as a product of positive Dehn twists along the curves $\gamma_i$ such that every curve gets twisted along exactly once.
What we have to do is choose a product order, i.e.\ enumerate the vertices of~$\Gamma$.
Note that the two twists along $\gamma_i, \gamma_j$ commute if the corresponding vertices are not connected by an edge.
Therefore, the subgroup of the mapping class group of $\Sigma$ generated by Dehn twists along the $\gamma_i$ is a quotient
of the Artin group $A(\Gamma)$.
Applying \cref{uniqueconjugacyclass} now gives a mapping class of $\Sigma$, uniquely defined up to conjugation.
We call this conjugacy class the \emph{checkerboard monodromy} associated with the connected checkerboard graph $\Gamma$.
Recalling \cref{uniqueconjugacyclass}, one sees that the checkerboard monodromy comes from a product order $v_1 \cdots v_n$
in which the vertices on the boundary of a bounded region of $\Gamma$ occur in cyclic order -- clockwise for a black region,
and anticlockwise for a white region.

\begin{remark}
Our construction of an open book can be generalised in several ways.
The orientation of a checkerboard graph simultaneously determines the
intersection of the core curves of the annuli and the twist order of the monodromy.
But of course one can specify these two parameters in a different fashion, and independently from each other.
More generally, one can allow different types of graphs or even negative
Dehn twists, cf.\ constructions of Hironaka~\cite{Hi0, Hi}.
Our focus is to capture the features of open books associated with
positive braids.
\end{remark}

\section{Proof of \texorpdfstring{\cref{braidthm} and \cref{graphthm}}{Theorem 1 and Theorem 2}}
\label{sec:thm2}
This section is devoted to the proof of \cref{graphthm}. 
We first deal with the case where $\Gamma$ is the linking graph of a positive braid word. 
We show that the checkerboard monodromy defined in \cref{checkopenbooks} 
equals the monodromy of the fibred link defined by the closure of the positive braid. 

Let $\Sigma_\beta$ be the canonical Seifert surface associated with a prime positive braid word~$\beta$. 
By a theorem of Stallings, the closure of $\beta$ is a fibred link and $\Sigma_\beta$ is a fibre surface~\cite{Sta}.  
Furthermore, the monodromy is a product of positive Dehn twists along certain curves $\alpha_i$ corresponding to bricks in the brick diagram. 
The canonical Seifert surface and the twist curves are depicted for the braid $\beta= \sigma_2\sigma_1\sigma_3\sigma_2\sigma_1\sigma_2\sigma_3\sigma_2$ in \cref{braidsurfaceexample}.
\begin{figure}[b]
\def\svgwidth{160pt}
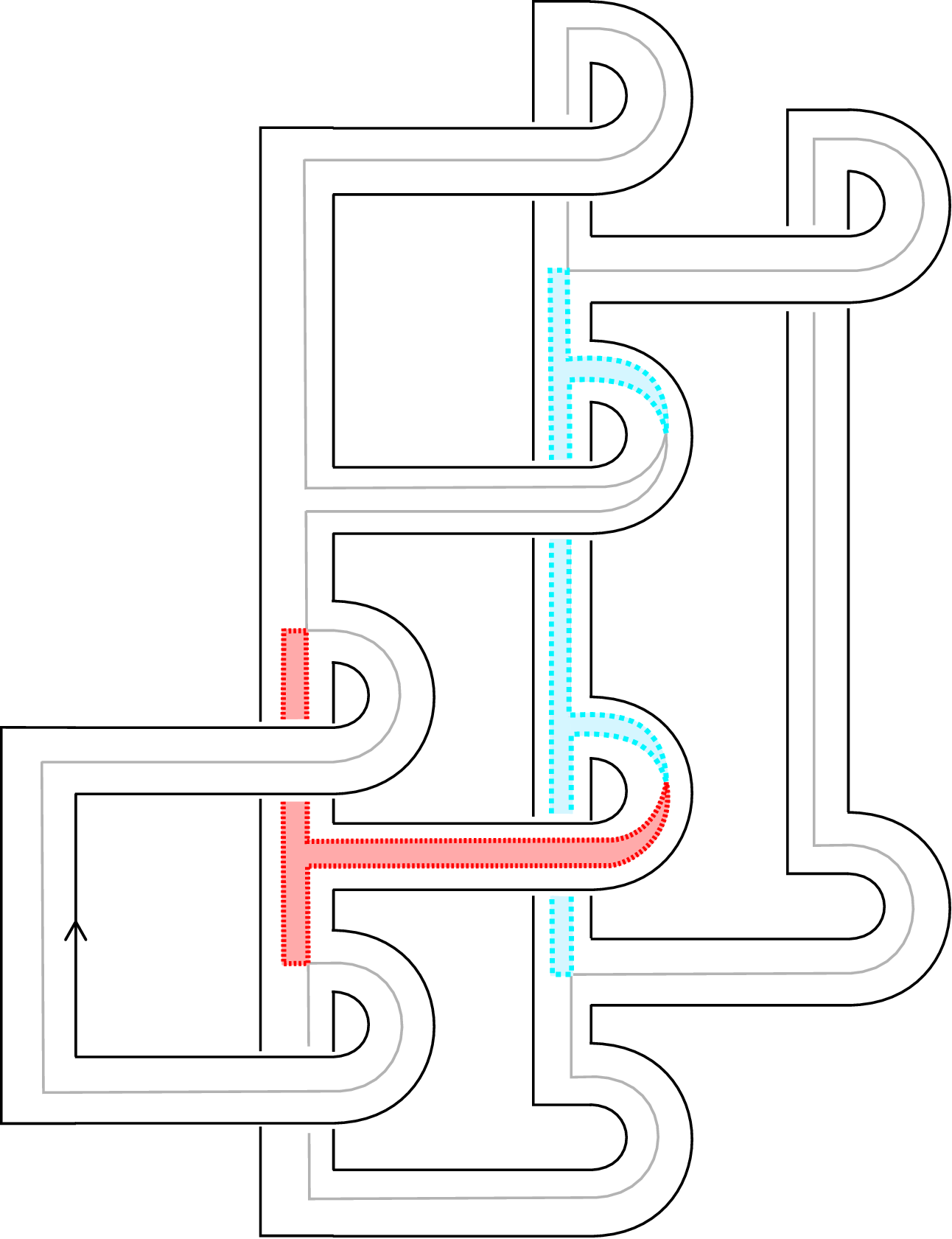
\caption{The canonical Seifert surface of the positive braid $\sigma_2\sigma_1\sigma_3\sigma_2\sigma_1\sigma_2\sigma_3\sigma_2$ 
is the fibre surface. The monodromy is a product of positive Dehn twists along the grey curves.}
\label{braidsurfaceexample}
\end{figure}
More precisely, the canonical Seifert surface is obtained by successive positive Hopf plumbing via adding hook-like handles from bottom to top within every column, 
proceeding from the rightmost column to the leftmost. 
Every added handle after the first within a column describes a positive Hopf plumbing, 
with the core curve passing through the added handle and the one just below, compare with \cref{braidsurfaceexample}.
This leads to a Hopf band $H_i$ with core curve $\alpha_i$ per brick of the brick diagram of~$\beta$. 
By definition, two core curves $\alpha_i$ and $\alpha_j$ intersect if and only if their corresponding bricks are linked.
In particular, there is a one-to-one correspondence between vertices of the linking graph $\Gamma$ and Hopf bands $H_i$ 
with core curves $\alpha_i$ in the plumbing construction of the canonical Seifert surface of~$\beta$.

The monodromy of a single positive Hopf band is a positive Dehn twist along its core curve. Therefore, the monodromy of a positive braid is a product of positive Dehn twists
along the core curves of the Hopf bands in the plumbing construction, since the monodromy of a plumbing is the product of the monodromies of the plumbing summands~\cite{Gab,Sta}.
We remark that in the cyclic order of this Dehn twist product, the twists corresponding to a boundary cycle around a black or white region appear in a clockwise or anticlockwise order, respectively.

Let $\Sigma_\Gamma$ and $\gamma_i$ be the surface and the twist curves, respectively, obtained by the construction described in \cref{checkopenbooks} applied to the linking graph~$\Gamma$ of $\beta$.
By \cref{uniqueconjugacyclass}, we are done if we can show that abstractly, the surface $\Sigma_\beta$ and the core curves $\alpha_i$ agree 
with the surface and the twist curves obtained by the construction discussed in \cref{checkopenbooks}
applied to the connected checkerboard graph $\Gamma$ associated with~$\beta$. 
Here we orient the core curves $\alpha_i$ so that they run anticlockwise when drawn as in \cref{braidsurfaceexample}. 
\newline

\noindent
\emph{Claim. There exists a homeomorphism from $\Sigma_\beta$ to $\Sigma_\Gamma$, sending the oriented core curves $\alpha_i$ to the oriented twist curves $\gamma_i$.}
\newline

Define a bijection $f$ from the intersection points of the core curves $\alpha_i$ to the intersection points of the twist curves~$\gamma_i$. 
Such a bijection exists, since by construction, both sets of intersection points are in a natural bijection with the set of edges of~$\Gamma$.
This bijection $f$ can be extended to a homeomorphism $F:\bigcup\alpha_i \to \bigcup\gamma_i$, respecting the orientations of the curves $\alpha_i$ and~$\gamma_i$. 
This extension is possible since the cyclic order in which a core curve $\alpha_i$ 
intersects other core curves $\alpha_j$ equals the cyclic order of the edges incident to the vertex corresponding to~$\alpha_i$.
By construction, the same holds for the curves~$\gamma_i$.

Note that the unions $\bigcup\alpha_i$ and $\bigcup\gamma_i$ fill the surfaces $\Sigma_\beta$ and $\Sigma_\Gamma$, respectively, 
i.e.\ the complement consists of discs and boundary-parallel annuli.
To prove the claim, it therefore suffices to show that a boundary cycle of $\Sigma_\beta\setminus\bigcup\alpha_i$ bounds a disc if and only if its image under $F$ bounds a disc in $\Sigma_\Gamma$.
This is indeed the case, since both $\bigcup\alpha_i$ and $\bigcup\gamma_i$ bound exactly one disc for every bounded region of the complement of the checkerboard graph $\Gamma$.
Furthermore, in both cases, the disc lies to the right or to the left of the curves depending on whether the region is coloured black or white, respectively, compare with \cref{surfaceexample,braidsurfaceexample}. 
This proves the claim and hence the part of \cref{graphthm} concerning positive braids and their linking graphs.

We still need to prove that open books associated with arbitrary connected checkerboard graphs define strongly quasipositive fibred links in $S^3$. For this, we use that a positive stabilisation of an open book does not change the type of the resulting contact 3-manifold (see e.g.~\cite[Section~2]{E}), and preserves strong quasipositivity \cite{He,Ru2}. We claim that the open book associated with a connected checkerboard graph with $n$ vertices is obtained from the trivial open book $(D^2,\text{Id})$ by $n$ times iterated positive stabilisations. Indeed, every connected checkerboard graph $\Gamma \subset \R^2$ contains a vertex $v$ adjacent to the unbounded region such that $\Gamma \setminus v$ is still a connected checkerboard graph.
By construction, the surface $\Sigma_\Gamma$ is obtained from $\Sigma_{\Gamma \setminus v}$ by adding several 1-handles and 2-handles. By cancellation, one may equivalently add a single 1-handle. The monodromy of $\Sigma_{\Gamma}$ is given (up to conjugation) by composing the monodromy of $\Sigma_{\Gamma \setminus v}$ with a Dehn twist along a curve that runs once through that 1-handle. This is precisely a positive stabilisation. Therefore, we are done by induction on the number of vertices.  

\section{Orientation, invertibility and mutants}
\label{sec:orientation}
In this section, we discuss the effect a change of orientation or of the embedding of a checkerboard graph has on the associated link.
Let us first focus on bridges.
Throughout this section, let $\Gamma$ be a connected checkerboard graph with a bridge $e$ and let $\Gamma_1, \Gamma_2$ be the connected components of $\Gamma\setminus e$, which are also connected checkerboard graphs.
\begin{lemma}\label{lem:plumb}
The fibre surface $\Sigma_{\Gamma}$ is a plumbing of $\Sigma_{\Gamma_1}$ and $\Sigma_{\Gamma_2}$.
\end{lemma}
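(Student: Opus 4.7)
The plan is to identify the rectangle of plumbing explicitly in $\Sigma_\Gamma$ and verify the two defining properties of a plumbing. Write $v_1, v_2$ for the endpoints of the bridge $e$, with $v_i \in \Gamma_i$. By construction, the edge $e$ contributes to $\Sigma_\Gamma$ a single rectangle $R_e$ along which the annuli $A_{v_1}$ and $A_{v_2}$ are glued, with the four sides of $R_e$ alternating between $\partial A_{v_1}$ and $\partial A_{v_2}$. The claim is that $R_e$ is the plumbing rectangle.

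First, I would decompose $\Sigma_\Gamma$ as $\Sigma_{\Gamma_1} \cup \Sigma_{\Gamma_2}$ with the two pieces meeting precisely in $R_e$. This step is largely combinatorial: the vertices, the edges other than $e$, and the bounded regions of $\Gamma$ each partition between $\Gamma_1$ and $\Gamma_2$. The vertex and edge partitions are immediate, and for the bounded regions I invoke the definition of a checkerboard graph: every bounded region is the interior of an induced cycle, and a bridge cannot lie on any simple cycle. Hence every bounded region of $\Gamma$ is entirely contained in $\Gamma_1$ or in $\Gamma_2$, and the annuli, gluing rectangles and discs making up $\Sigma_\Gamma$ other than $R_e$ are disjoint between $\Sigma_{\Gamma_1}$ and $\Sigma_{\Gamma_2}$.

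Next, I would verify that $R_e$ appears inside each of $\Sigma_{\Gamma_1}$ and $\Sigma_{\Gamma_2}$ as a sub-rectangle with exactly two opposite sides on the respective boundary. Within $A_{v_1} \subset \Sigma_{\Gamma_1}$, the rectangle $R_e$ appears as a sub-rectangle $R^{(1)}$ whose two sides on $\partial A_{v_1}$ lie in arcs of $\partial A_{v_1}$ immediately adjacent, in the cyclic order around $v_1$, to the position vacated by removing~$e$. These arcs sit in the unbounded region of $\Gamma_1$: indeed, by the argument above, $e$ itself lies in the unbounded region of $\Gamma$, and removing $e$ only enlarges that face. Since no disc of $\Sigma_{\Gamma_1}$ is ever glued to the unbounded region, these two sides of $R^{(1)}$ remain on $\partial \Sigma_{\Gamma_1}$; symmetrically in $\Sigma_{\Gamma_2}$, the other two opposite sides of $R_e$ stay on $\partial \Sigma_{\Gamma_2}$.

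Assembling these facts, $\Sigma_\Gamma$ is obtained from $\Sigma_{\Gamma_1}$ and $\Sigma_{\Gamma_2}$ by identifying $R^{(1)}$ with $R^{(2)}$ via a homeomorphism swapping the roles of boundary sides and interior sides, which is precisely a plumbing along a rectangle. The main subtlety to sort out, I expect, is the careful bookkeeping of which arcs of $\partial A_v$ are absorbed by disc gluings and which remain on $\partial \Sigma_{\Gamma_i}$; the bridge hypothesis enters exactly to guarantee that the arcs flanking $R_e$ on either side remain free on each side of the decomposition.
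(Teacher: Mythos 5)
There is a genuine gap. What you prove is that the \emph{abstract} surface $\Sigma_\Gamma$ decomposes along the rectangle $R_e$ into copies of the abstract surfaces $\Sigma_{\Gamma_1}$ and $\Sigma_{\Gamma_2}$, with the flanking arcs of $R_e$ remaining free on the boundary of each piece. That combinatorial bookkeeping is correct (and the observation that a bridge cannot bound a bounded region, hence lies on the unbounded face, is a nice point). But the lemma is a statement about the \emph{fibre surface} $\Sigma_\Gamma \subset S^3$, and ``plumbing'' here means Murasugi sum: one needs a $2$-sphere in $S^3$ meeting $\Sigma_\Gamma$ precisely in the square, separating the two summands into the two complementary balls. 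This is exactly how the lemma is used in the proof of \cref{prop:mut}, where a ball is cut out and rotated. An abstract decomposition of a surface along a square says nothing about the existence of such a sphere in the ambient manifold, so your argument does not reach the conclusion. You also never single out which of the two possible plumbings (distinguished by the sign of the intersection of the two core curves, i.e.\ by the orientation of $e$) is the right one, and you never mention the monodromy.

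The paper closes this gap by arguing in the opposite direction: it forms the plumbing of the two embedded fibre surfaces in $S^3$ along the squares determined by $e$, choosing the one of the two orientation-preserving plumbings whose core-curve intersection sign matches the orientation of $e$; by Stallings this plumbing is again a fibre surface whose monodromy is the product of the two monodromies; it then checks that this product, read in the order it arises, satisfies the twist-order condition of \cref{subsec:twistorder} and hence equals the checkerboard monodromy of $\Gamma$ (this is where your abstract identification of the surfaces and curves does real work); finally it invokes the fact that the monodromy determines the isotopy type of the fibre surface. The appeal to Stallings, the monodromy comparison, and the uniqueness step are precisely the ingredients that convert your abstract decomposition into the embedded statement, and they are absent from your write-up.
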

\begin{proof}
The edge $e$ determines a square on each of $\Sigma_{\Gamma_1}$ and $\Sigma_{\Gamma_2}$, as indicated in \cref{circularordering}.
Those squares lie on the annuli corresponding to the endpoints of $e$.
Note that there are two ways to plumb $\Sigma_{\Gamma_1}, \Sigma_{\Gamma_2}$ in an orientation-preserving way along those squares.
Those two ways are distinguished by the sign of the intersection
of the core curves of the two annuli corresponding to the endpoints of $e$. Consider the plumbing surface for which that sign conforms with the orientation of the edge $e$ in the sense of \cref{positiveintersection}.
This plumbing surface is a fibre surface in $S^3$ whose monodromy is the product of the monodromies of $\Sigma_{\Gamma_1}, \Sigma_{\Gamma_2}$~\cite{Sta}.
It is canonically homeomorphic to $\Sigma_{\Gamma}$,
and the order in which Dehn twists appear in the product of the monodromies respects the condition of \cref{subsec:twistorder}.
Thus the product of monodromies equals the checkerboard monodromy.
This concludes the proof, since the monodromy uniquely determines the isotopy type of the fibre surface.
\end{proof}

\begin{proposition}\label{prop:mut}
Reversing the orientation of $e$ yields a checkerboard graph $\Gamma'$
whose associated link $L'$ is a positive mutant of $L$.
\end{proposition}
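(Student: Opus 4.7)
The plan is to apply \cref{lem:plumb} to both $\Gamma$ and $\Gamma'$, realising $\Sigma_{\Gamma}$ and $\Sigma_{\Gamma'}$ as plumbings of the same two pieces along the same square, but with opposite orientation choices, and then to exhibit this change of plumbing as a positive mutation on the ambient link.

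First I would observe that reversing the orientation of $e$ affects the construction of \cref{checkopenbooks} only at the rectangle $R$ associated with $e$: by \cref{positiveintersection}, the flip reverses the sign of intersection of the two core curves meeting at $R$, while leaving every other ingredient (the circular orderings at all other vertices, the checkerboard colouring, and the monodromy enumerations on $\Gamma_1$ and $\Gamma_2$) unchanged. In particular, the fibre surfaces $\Sigma_{\Gamma_1}$ and $\Sigma_{\Gamma_2}$ are identical for $\Gamma$ and $\Gamma'$. Applying \cref{lem:plumb} to both graphs, I conclude that $\Sigma_{\Gamma}$ and $\Sigma_{\Gamma'}$ are both plumbings of $\Sigma_{\Gamma_1}$ and $\Sigma_{\Gamma_2}$ along $R$, realising the two distinct orientation-preserving plumbing choices highlighted in the proof of that lemma.

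Next I would interpret the switch between these two plumbings as a Conway mutation on the boundary links. A small regular neighbourhood of $R$ in $S^3$ is a ball $B$ whose boundary is a 2-sphere meeting $L = \partial \Sigma_{\Gamma}$ in four points (the four corners of $R$, thickened transversally); in other words, $\partial B$ is a Conway sphere for $L$. The half-turn of $R$ about its centre extends to an involution $\tau$ of $B$ that permutes the four intersection points $\partial B \cap L$ in two pairs. By construction, conjugating the gluing of $\Sigma_{\Gamma_1}$ to $\Sigma_{\Gamma_2}$ along $R$ by $\tau$ exchanges the two orientation-preserving ways to plumb. Regluing $S^3 \setminus B$ to $B$ via $\tau$ therefore transforms $L$ into $L'$, and $L'$ is a Conway mutant of $L$.

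The last step is to verify that this mutation is \emph{positive}, meaning that $L$ and $L'$ agree as oriented links in a neighbourhood of the Conway sphere, so that $L'$ inherits a well-defined compatible orientation. This should follow from the fact that both plumbing choices are orientation-preserving at the surface level, which forces $\tau$ to match the four strands of $L$ near $\partial B$ compatibly with the boundary orientations of $\Sigma_{\Gamma_1}$ and $\Sigma_{\Gamma_2}$. I expect the main obstacle to be precisely this positivity check: among the three non-trivial involutions of a four-punctured sphere, only one realises the plumbing flip, and pinning down that this one yields a positive mutation requires a careful local analysis around $R$, keeping track of the orientations of the core curves $\gamma_i, \gamma_j$ as prescribed by the edge orientation of $e$.
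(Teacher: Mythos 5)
Your overall strategy is the same as the paper's: apply \cref{lem:plumb} to both $\Gamma$ and $\Gamma'$, note that $\Sigma_{\Gamma_1},\Sigma_{\Gamma_2}$ are unchanged, identify $\Sigma_{\Gamma}$ and $\Sigma_{\Gamma'}$ as the two possible plumbings along the square given by $e$, and realise the switch between them as a $180^{\circ}$ rotation of a ball. The gap is in your choice of the ball. A small regular neighbourhood $B$ of the plumbing square $R$ is not a Conway sphere for $L$: the four corners of $R$ lie on $L=\partial\Sigma_{\Gamma}$, so $L\cap B$ consists of four short arcs and $\partial B$ meets $L$ in \emph{eight} points, not four. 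Worse, even granting a suitable $B$, the move you describe is vacuous: you define $\tau$ as an involution of the whole ball $B$ extending the half-turn of $R$, and such an involution carries the tangle $(B,L\cap B)$ to itself; regluing along $\tau|_{\partial B}$ then produces a link isotopic to $L$ (one undoes $\tau$ inside $B$). The actual content of the mutation is that the rotation repositions one \emph{entire} plumbing piece relative to the other, and for that the ball must be large, not small.

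The paper's fix is to take $B$ to be one of the two plumbing balls, i.e.\ the ball whose intersection with $\Sigma_{\Gamma}$ is all of $\Sigma_{\Gamma_2}$ (say), so that $\partial B$ meets $\Sigma_{\Gamma}$ exactly in $R$ and meets $L$ exactly in the four corners of $R$ --- a genuine Conway sphere. Cutting out this ball, rotating it by $180^{\circ}$ (the half-turn of $R$ extended to $\partial B$ and then coned into $B$), and regluing carries $\Sigma_{\Gamma_2}$ to its flipped position and hence produces precisely the other plumbing, i.e.\ $\Sigma_{\Gamma'}$. Your closing worry about positivity is resolved the same way in the paper: since this rotation does not reverse the orientation of the tangle $B\cap L$, the mutation is positive by definition; no case analysis of the three involutions of the four-punctured sphere is needed once the correct ball and rotation are in place.
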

\begin{proof}
By \cref{lem:plumb}, the fibre surfaces $\Sigma_{\Gamma}, \Sigma_{\Gamma'}$
arise as the two different possible plumbings of $\Sigma_{\Gamma_1}, \Sigma_{\Gamma_2}$ along the squares given by $e$.
There is a ball in $S^3$ whose intersection with $\Sigma_{\Gamma}$ is one of those plumbed fibre surfaces.
The boundary of this ball intersects $\Sigma_{\Gamma}$ in the plumbing square, and $L$ in four points;
cutting the ball out, rotating it by 180$^{\circ}$ and regluing it yields $\Sigma_{\Gamma'}$.
So $L$ and $L'$ are mutants, and  since the mutation was performed without reversing the orientation of the tangle contained in the ball, they are positive mutants.
\end{proof}
Next, let us consider $-\Gamma$, the checkerboard graph obtained from $\Gamma$ by reversing the orientation of \emph{all} edges.
Denote by $-L$ the \emph{inverse} of $L$, i.e.\ the link obtained by reversing the orientation of all components of $L$.
\begin{proposition}\label{prop:reverse}
The link associated with $-\Gamma$ is $-L$.
\end{proposition}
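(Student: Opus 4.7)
The plan is to identify the open book $(\Sigma_{-\Gamma}, \varphi_{-\Gamma})$ with $(-\Sigma_\Gamma, \varphi_\Gamma^{-1})$ and then invoke the following standard fact: if $(\Sigma,\varphi)$ is an open book for an oriented fibred link $L \subset S^3$, then $(-\Sigma, \varphi^{-1})$ is an open book for $-L$. Indeed, reversing the orientation of the page reverses both the orientation of the mapping torus $T_\varphi$ and the boundary orientation of each page, while replacing $\varphi$ by $\varphi^{-1}$ reverses the $[0,1]$-direction in the mapping torus; the two reversals of the ambient orientation cancel, and only the orientation of the binding inherited from $\partial \Sigma$ is reversed.

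The surface identification $\Sigma_{-\Gamma} \cong -\Sigma_\Gamma$ rests on the observation that the recipe of \cref{checkopenbooks} produces a well-defined unoriented surface equipped with unoriented twist curves $\{\gamma_i\}$ from the combinatorial data of $\Gamma$; only the orientation of the resulting surface records the edge orientations and the region colouring. Concretely, given an orientation of the abstract surface, the intersection sign of $\gamma_i$ with $\gamma_j$ at each crossing determines the orientation of the corresponding edge, while the placement of each disc cap on the \emph{left} or \emph{right} boundary of the annulus $A_i$ (relative to the anticlockwise $\gamma_i$ in the oriented $A_i$) determines whether the adjacent region is white or black. Reversing the orientation of the surface simultaneously flips every intersection sign and swaps every left/right designation of annular boundaries, which is exactly the passage from $\Gamma$ to $-\Gamma$. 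Under this identification, the anticlockwise core curve of each annulus in $\Sigma_{-\Gamma}$ corresponds to the reverse of the anticlockwise core curve in $\Sigma_\Gamma$.

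For the monodromy, I would use \cref{uniqueconjugacyclass}: swapping the colours of all regions interchanges the clockwise and anticlockwise conditions on the boundary cycles of bounded regions, so if $v_1, \ldots, v_n$ is a valid enumeration for $\Gamma$, then $v_n, v_{n-1}, \ldots, v_1$ is a valid enumeration for $-\Gamma$. A positive Dehn twist along an unoriented curve depends only on the orientation of the ambient surface, so under the identification $\Sigma_{-\Gamma} \cong -\Sigma_\Gamma$, each positive Dehn twist in $\Sigma_{-\Gamma}$ corresponds to the inverse of the positive Dehn twist around the same curve in $\Sigma_\Gamma$. Composing in the reversed order yields $T^{-1}_{v_n} T^{-1}_{v_{n-1}} \cdots T^{-1}_{v_1} = (T_{v_1} T_{v_2} \cdots T_{v_n})^{-1} = \varphi_\Gamma^{-1}$, as required.

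The main obstacle will be the orientation bookkeeping in the surface identification: one must carefully check that reversing the orientation of the abstract surface simultaneously accounts for both the flipped edge orientations (intersection signs) and the swapped region colours (inner/outer disc-cap placements), and that the curves $\gamma_i$ pair up correctly with reversed orientations. Once this is pinned down, the rest follows from \cref{uniqueconjugacyclass} and the standard open-book calculation described above.
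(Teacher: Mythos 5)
Your proposal is correct and follows essentially the same route as the paper: identify $\Sigma_{-\Gamma}$ with $-\Sigma_\Gamma$ via the flipped intersection signs, observe that the reversed enumeration is valid for $-\Gamma$ so the monodromy becomes $T_{v_n}^{-1}\cdots T_{v_1}^{-1}=\varphi^{-1}$, and conclude via the standard fact that $(-\Sigma,\varphi^{-1})$ is an open book for $-L$. Your extra bookkeeping about the colour swap and disc-cap placement makes explicit a point the paper leaves implicit, but the argument is the same.
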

\begin{proof}
Let $\varphi$ be the checkerboard monodromy of $\Sigma_{\Gamma}$. Reversing the orientation of $\Sigma_{\Gamma}$
yields a Seifert surface $-\Sigma_{\Gamma}$ for $-L$, which is a fibre surface with monodromy $\varphi^{-1}$.
Indeed, one easily sees that the mapping torus of $(-\Sigma_{\Gamma}, \varphi^{-1})$ is homeomorphic to the mapping torus of $(\Sigma_{\Gamma}, \varphi)$.
Thus it suffices to check that the open book associated with $-\Gamma$ is equivalent to $(-\Sigma_{\Gamma}, \varphi^{-1})$.
To this end, we first observe that the surface associated with $-\Gamma$ differs from $\Sigma_{\Gamma}$ merely by the intersections of core curves of annuli --
all of them are of opposite sign.
So the surface associated with $-\Gamma$ is indeed canonically homeomorphic to $-\Sigma_{\Gamma}$. 
Next, we note that the monodromy $\varphi$ is defined as a product $T_1T_2\cdots T_n$ of Dehn twists in a certain order.
From the construction of checkerboard open books it is evident that taking the reverse order yields the monodromy of the surface associated with $-\Gamma$.
Moreover, if we identify that surface with $-\Sigma_{\Gamma}$, each individual twist is the inverse of the corresponding twist on $\Sigma_{\Gamma}$,
because all Dehn twists on $\Sigma_{\Gamma}$ as well as on $-\Sigma_{\Gamma}$ are positive. Hence the monodromy associated with $-\Gamma$ is $T_n^{-1} \cdots T_2^{-1} T_1^{-1} = \varphi^{-1}$. This concludes the proof.
\end{proof}

\begin{figure}[t]
\newcommand{\myinclude}[1]{\raisebox{-.45\height}{\includegraphics[scale=.64]{figs/#1.pdf}}}
\Crefname{proposition}{Prop.}{Prop.}
\Crefname{corollary}{Cor.}{Cor.}
\parbox{1.6cm}{\centering\myinclude{reverse_ex1}\\[1ex]\footnotesize $13$n$_{241}$}
$\xrightarrow{\text{\cref{prop:reverse}}}$
\parbox{1.6cm}{\centering\myinclude{reverse_ex2}\\[1ex]\footnotesize $-13$n$_{241}$}
$\xlongequal{\text{\cref{cor:move}}}$
\parbox{1.6cm}{\centering\myinclude{reverse_ex3}\\[1ex]\footnotesize $-13$n$_{241}$}
$\xlongequal{\text{reflection}}$
\parbox{1.6cm}{\centering\myinclude{reverse_ex1}\\[1ex]\footnotesize $13$n$_{241}$}\\[9ex]

\parbox{1.6cm}{\footnotesize\ $13$n$_{300}$\\[1.5ex]\centering\myinclude{reverse_ex4}}
$\xrightarrow{\text{\cref{prop:reverse}}}$
\parbox{1.6cm}{\footnotesize\ $-13$n$_{300}$\\[1.5ex]\centering\myinclude{reverse_ex5}}
$\xlongequal{\text{\cref{cor:move}}}$
\parbox{1.6cm}{\footnotesize\ $-13$n$_{300}$\\[1.5ex]\centering\myinclude{reverse_ex6}}
\makebox[\widthof{$\xlongequal{\text{reflection}}$}][c]{$\neq$}
\parbox{1.6cm}{\footnotesize\ $13$n$_{300}$\\[1.5ex]\centering\myinclude{reverse_ex4}}
\caption{The first row shows the invertibility of the weight two arborescent knot $13n_{241}$, using
its checkerboard tree. In the second row, an attempt to show the invertibility of $13n_{300}$ (a mutant of $13n_{241}$)
in a similar fashion fails -- and indeed, $13n_{300}$ is not invertible.}
\label{fig:moves}
\end{figure}

The two previous propositions immediately yield the following.
\begin{corollary}
A weight two arborescent link is a positive mutant of its inverse.\qed
\end{corollary}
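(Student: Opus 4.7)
The key observation is that a positive fibred arborescent link corresponds (via the construction in \cref{checkopenbooks}) to a checkerboard tree $\Gamma$, and in a tree \emph{every} edge is a bridge. This is the hook that lets the two preceding propositions combine: \cref{prop:mut} requires bridges, and \cref{prop:reverse} requires reversal of all edges.

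The plan is to interpolate between $\Gamma$ and $-\Gamma$ by reversing the edges of $\Gamma$ one at a time. Enumerate the edges of $\Gamma$ as $e_1, \ldots, e_m$ and let $\Gamma_0 = \Gamma$, and let $\Gamma_k$ be obtained from $\Gamma_{k-1}$ by reversing the orientation of $e_k$. Since $\Gamma$ is a tree, each $\Gamma_k$ is a tree, so $e_{k+1}$ is a bridge of $\Gamma_k$. Applying \cref{prop:mut} at each step, the link $L_k$ associated with $\Gamma_k$ is a positive mutant of $L_{k-1}$. After all $m$ steps we have $\Gamma_m = -\Gamma$, whose associated link is $-L$ by \cref{prop:reverse}. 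Since positive mutation is a symmetric relation and compositions of positive mutations are (by convention) still called positive mutants, this exhibits $-L$ as a positive mutant of $L$.

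The only step requiring care is verifying that \cref{prop:mut} continues to apply at each stage: one must check that $\Gamma_k$ is still a checkerboard graph (trivially true for trees, since the condition on checkerboard trees imposes no constraint on edge orientations, as noted in \cref{defs}) and that $e_{k+1}$ is a bridge in $\Gamma_k$ (clear, as deleting any edge from a tree disconnects it). No computation or further invariant is needed.

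I do not expect any serious obstacle here; the work was done in \cref{prop:mut,prop:reverse}, and the corollary is essentially a bookkeeping observation that trees are the universe in which every edge can be flipped individually by a positive mutation. The only conceptual point to make explicit is the convention that a composition of positive mutations is itself considered a positive mutation relation, so that the iterated application of \cref{prop:mut} produces the desired single statement about $L$ and $-L$.
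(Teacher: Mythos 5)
Your argument is correct and is precisely the one the paper intends: since every edge of a tree is a bridge, reversing the edges one at a time via \cref{prop:mut} gives a chain of positive mutations ending at $-\Gamma$, whose link is $-L$ by \cref{prop:reverse}. The paper leaves this as an immediate consequence of the two propositions, so your write-up simply makes the same bookkeeping explicit.
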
\noindent
Note that every knot is a mutant of its inverse, but not necessarily a positive one.
To wit, positive mutants have S-equivalent Seifert forms \cite{Kirk}, whereas general mutants need only have algebraically concordant Seifert forms \cite{Kim};
so a knot such as the $P(7,3,19)$ pretzel knot, whose Seifert form is not S-equivalent to its transpose \cite{trotter}, is not a positive mutant of its inverse.

Let us now come to the relevance of the embedding of a checkerboard graph $\Gamma$.
As a first observation, we claim that the open books associated with $\Gamma$ and the mirror image of $\Gamma$ in the plane are equivalent.
Indeed, reflecting $\Gamma$ has the effect of reversing for all vertices the cyclic orders of incident edges.
The same effect is achieved by reversing the orientations of all core curves, which does not change the open book.

Reflecting only part of $\Gamma$, however, may change the open book. Take a bridge $e$, and denote as before the connected components of $\Gamma\setminus e$ as $\Gamma_1, \Gamma_2$.
One may attach $\Gamma_1$ along $e$ to a mirror image of $\Gamma_2$, forming a connected checkerboard graph $\Gamma''$.
The surface $\Sigma_{\Gamma_2}$ is unchanged by reflecting $\Gamma_2$, but $\Sigma_{\Gamma_1}$ and $\Sigma_{\Gamma_2}$ are plumbed differently in $\Gamma''$.
So the link associated with $\Gamma''$ equals the link associated with $\Gamma'$ (the graph obtained from $\Gamma$ by reversing the orientation of $e$).

\begin{corollary}\label{cor:move}
Reversing the orientation of $e$, and taking the mirror image of one of the connected components of $\Gamma\setminus e$
yields a connected checkerboard graph $\widetilde{\Gamma}$. The links associated with $\Gamma$ and $\widetilde{\Gamma}$ are isotopic.\qed
\end{corollary}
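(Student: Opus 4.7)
The plan is to observe that the corollary is an immediate consequence of the two observations laid out in the paragraph just above it. By \cref{lem:plumb}, the surface $\Sigma_\Gamma$ arises as one of the two possible orientation-preserving plumbings of $\Sigma_{\Gamma_1}$ and $\Sigma_{\Gamma_2}$ along the square determined by $e$, the specific choice of plumbing being recorded by the orientation of $e$ (in the sense of \cref{positiveintersection}). The same statement applies to $\widetilde{\Gamma}$, with $\Gamma_2$ replaced by its mirror image in the plane. So the task reduces to comparing these two plumbings.

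The first step is to note that reflecting $\Gamma_2$ in the plane does not change the abstract pair $(\Sigma_{\Gamma_2}, \{\gamma_i\}_{v_i \in \Gamma_2})$: reflection merely reverses the cyclic order of incident edges at every vertex, an effect that — as pointed out just before the corollary — coincides with reversing the orientations of all core curves and therefore does not alter the associated open book. However, this reflection does flip the sign of the intersection of the two annulus core curves meeting across $e$, thereby toggling between the two possible plumbings of $\Sigma_{\Gamma_1}$ and $\Sigma_{\Gamma_2}$. The second step records that reversing the orientation of $e$ has exactly the same effect on the plumbing sign, by the very definition of the gluing in \cref{positiveintersection}. Hence performing both moves simultaneously cancels the two sign flips.

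It remains to verify that the checkerboard monodromy is also unchanged by the combined move. This is bookkeeping rather than a genuine difficulty: reflecting $\Gamma_2$ swaps its black and white regions along with swapping clockwise and anticlockwise, so the cyclic orderings of Dehn twists prescribed by \cref{uniqueconjugacyclass} on each region of $\Gamma_2$ continue to hold, and the product-order compatibility across the bridge $e$ is preserved because the move does not alter which endpoint of $e$ lies in $\Gamma_1$ versus (mirrored) $\Gamma_2$. The main obstacle, therefore, is simply to confirm that the two geometric toggles align in the way asserted; once they do, $\Sigma_\Gamma$ and $\Sigma_{\widetilde{\Gamma}}$ are isotopic as fibre surfaces in $S^3$, and the corresponding links $L$ and $L(\widetilde{\Gamma})$ are isotopic.
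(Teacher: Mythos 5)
Your proposal is correct and follows essentially the same route as the paper: the paper's justification is exactly the paragraph preceding the corollary, which uses \cref{lem:plumb} to reduce everything to the choice between the two plumbings of $\Sigma_{\Gamma_1}$ and $\Sigma_{\Gamma_2}$ along the square given by $e$, and observes that reflecting $\Gamma_2$ and reversing the orientation of $e$ each toggle that choice, so doing both cancels. Your additional check on the monodromy is consistent with, and already subsumed by, \cref{lem:plumb}.
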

In particular, if $\Gamma_1$ or $\Gamma_2$ is symmetric under reflection, the orientation of $e$ does not matter for
the link type of $L$. In those cases, we omit the orientation of $e$ from drawings of $\Gamma$.
\cref{fig:moves} shows example applications of \Cref{prop:reverse} and \cref{cor:move}.

\begin{corollary}\label{cor:halfreverse}
Let $\Gamma$ be a checkerboard tree. Let $V$ be a subset of the vertices, containing exactly one endpoint of each edge.
Reversing the cyclic orderings of edges around all vertices in $V$ yields a checkerboard tree with associated link $-L$.
\end{corollary}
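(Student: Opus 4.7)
The plan is to deduce the corollary by combining \cref{prop:reverse} with iterated applications of \cref{cor:move}, choosing the mirrored components so that, in aggregate, only the cyclic orderings at vertices of $V$ end up reversed. Since $V$ contains exactly one endpoint of each edge, both $V$ and its complement $V^c$ are independent sets in $\Gamma$, so $\{V, V^c\}$ realises the unique bipartition of the tree $\Gamma$. In particular, for any vertex $r \in V^c$, a vertex $v$ belongs to $V$ precisely when its graph distance from $r$ is odd.

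First I would invoke \cref{prop:reverse}, which gives that $-\Gamma$ represents $-L$. Then I would fix a root $r \in V^c$ and process the edges of $\Gamma$ one at a time, in any order: to each edge $e$ I apply \cref{cor:move}, reversing the orientation of $e$ and mirroring the component of $\Gamma \setminus e$ not containing $r$, which I call the \emph{descendant side} of $e$. Each such move preserves the associated link, so the graph obtained after all edges have been processed still represents $-L$.

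It remains to identify that final graph with $\widetilde{\Gamma}$. Each edge has its orientation reversed twice---once in passing from $\Gamma$ to $-\Gamma$, and once by the application of \cref{cor:move} to that edge---so the final edge orientations agree with those of $\Gamma$. The cyclic ordering at a vertex $v$, on the other hand, is reversed once for every edge $e$ whose mirroring affects $v$; and $v$ lies on the descendant side of $e$ exactly when $e$ lies on the unique $r$-to-$v$ path in $\Gamma$. The total number of reversals at $v$ is therefore the depth of $v$ from $r$, which is odd if and only if $v \in V$. This matches the definition of $\widetilde{\Gamma}$.

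The most delicate point will be to verify that the mirroring operation of \cref{cor:move}, on the level of planar embeddings, reverses the cyclic orderings at precisely the vertices of the mirrored component---including the endpoint of $e$ lying there, once $e$ is reattached along the plumbing square. This should follow directly from the set-up of \cref{cor:move}, but it deserves to be spelled out, since several such mirrorings are composed and the parity bookkeeping at each vertex relies on this being the exact effect. The one-vertex tree is a trivial edge case, where there is nothing to process and $L = -L$ by invertibility of the positive Hopf link.
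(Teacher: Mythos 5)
Your proof is correct and follows essentially the same route as the paper: apply \cref{cor:move} once per edge of the tree and combine with \cref{prop:reverse}. The only difference is organisational --- you fix a root in $V^{c}$ and count the reversals at each vertex as its depth, whereas the paper leaves the choice of mirrored component free at each step, uses a local parity argument to conclude that the reversed vertices form one side of the bipartition, and allows for a final global reflection; both versions of the bookkeeping are sound.
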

\begin{proof}
Apply \cref{cor:move} successively to every edge $e$ of $\Gamma$, obtaining a graph $\widetilde{\Gamma}$ with associated link $L$.
At every step, half of $\Gamma$ is reflected, and the
cyclic ordering of edges around vertices in that half changes. If this happens an even number of times for some vertex --
so the cyclic ordering around that vertex has not changed in total -- then it must have happened an odd number of times
for all adjacent vertices. So the cyclic ordering is reversed for precisely the vertices in $V$ (or precisely for those
not in $V$, in which case we replace $\widetilde{\Gamma}$ by its mirror image). Finally, reversing the orientations of
all edges of $\widetilde{\Gamma}$ yields the graph given in the statement, and so the
associated link is $-L$ by \cref{prop:reverse}.
\end{proof}

Aside from \cref{prop:mut}, there are other moves on checkerboard graphs that yield mutants.
Namely, let $\Gamma$ be a connected checkerboard graph with a cut-vertex $v$. Let $\Gamma_1, \ldots, \Gamma_n$ be the connected components of $\Gamma\setminus v$,
and $\overline{\Gamma_i}$ the subgraph of $\Gamma$ induced by $\Gamma_i \cup \{v\}$. One may glue together the $\overline{\Gamma_i}$ along their copies of the vertex $v$,
in any cyclic order around $v$. In this way one obtains a family of $(n-1)!$ connected checkerboard graphs, whose associated links are positive mutants.
Up to reflection, at most $(n-1)!/2$ of these graphs are different, so this will yield non-isotopic mutants only for vertices of degree 4 or higher.
One may also reverse the orientations of the $\Gamma_i$, which will result in mutants which are generally not positive.

Finally, let us compare the oriented links associated with checkerboard trees with 
Bonahon-Siebenmann's unoriented arborescent links~\cite{BS}, which are associated with plane unoriented weighted trees.
First off, since the weights give the self-linking of the plumbed bands, we will set all weights equal to $+2$, so all bands are positive Hopf bands.
With this restriction on the weights, Bonahon and Siebenmann associate links with unoriented plane trees $\Theta$. The bands are plumbed in such a way that the core curve of any fixed band intersects
the core curves of all adjacent bands with the same sign. We choose an orientation of $\Theta$ that makes every vertex a sink or a source, and
denote the tree with this orientation by $\Gamma$.
There are precisely two such orientations for $\Theta$.
Our construction associates with $\Gamma$ an oriented link -- and forgetting its orientation
gives the unoriented link the Bonahon-Siebenmann construction associates with $\Theta$. Conversely, given an oriented plane tree $\Gamma$, one may simultaneously change its
embedding and its orientation using \cref{cor:move}, without changing the associated link. By a repeated application of this move, one obtains an oriented plane tree~$\Gamma'$,
all whose vertices are sinks and sources. Forgetting the orientation gives an unoriented tree $\Theta$ with which Bonahon and Siebenmann associate a link of the same unoriented type
as the link we associate with $\Gamma$.

\begin{figure}
\newcommand{\myinclude}[1]{\raisebox{-.45\height}{\includegraphics[scale=.43]{figs/#1.pdf}}}
\parbox{6em}{\raggedleft
$13n_{241}$:
{\footnotesize
$\sigma_1^3\sigma_2^2\sigma_1^2\sigma_2\sigma_3\sigma_2^3\sigma_3$}}
 \myinclude{13n0241}
\hfill
\parbox{8em}{\raggedleft
$-13n_{300}$:
{\footnotesize
$\sigma_1^3\sigma_2^2\sigma_1^2\sigma_3\sigma_2\sigma_4\sigma_3^3\sigma_4$}}
\myinclude{m13n0300x}\hspace*{1em}\mbox{}
\caption{Two positive braid words with the same unoriented checkerboard graph, and closures of different unoriented knot type.}
\label{fig:unoriented}
\end{figure}

Hence our and Bonahon-Siebenmann's construction give the same set of (unoriented) links. Still, the orientation of edges is crucial
for the purpose of recovering positive braid links from their linking graph.
Indeed, there exist pairs of positive braid words (see e.g.~\cref{fig:unoriented}) whose linking graphs are trees that are isotopic as unoriented plane graphs --
but not as oriented plane graphs. The associated links must then be mutant, but they may be of different unoriented link types. 
Consequently, one cannot recover the unoriented link type of a prime positive braid link from its linking graph deprived of its orientation.

\section{Positive braids of small genus}
\label{sec:table}

This section is devoted to an example of a connected checkerboard graph whose associated link is
neither a positive braid link, nor a weight two arborescent link.
A linking graph has maximal degree at most 6; so it is easy enough
to come up with checkerboard graphs $\Gamma$ that are neither linking graphs nor trees.
But the link associated with such a $\Gamma$ could nevertheless be isotopic to one
associated with a linking graph or a checkerboard tree.

More concretely, let us consider the checkerboard graph shown in \cref{fig:nobraidnotree}.
Splitting along the bridge that is drawn thick, one can see that the associated knot $K$ is a plumbing of the trefoil
and the knot~$13n_{5016}$.
In general, it is difficult to show that a given fibred positive knot such as $K$
is not the closure of a positive braid.
Moreover, our construction does not directly give a knot diagram of $K$.
Fortunately, it is possible to compute the Seifert matrix and hence the Alexander polynomial of $K$ from $\Gamma$, see \cref{prop:seifert} below.
There are only finitely many
positive braid knots of any fixed genus $g$, and one can list them (see below for details).
It turns out that no positive braid knot of genus $6$ has the same Alexander polynomial as $K$ -- and so $K$ is not the closure of a positive braid.

\begin{figure}[t]
\centering
\includegraphics[scale=.75]{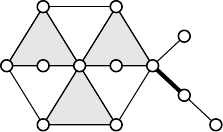}
\caption{A connected checkerboard graph whose associated knot
is neither weight two arborescent, nor a positive braid knot.}
\label{fig:nobraidnotree}
\end{figure}
\begin{proposition}\label{prop:seifert}
Let $\Gamma$ be a connected checkerboard graph. Fix an enumeration $v_1, \ldots, v_n$ of its vertices as in \cref{uniqueconjugacyclass}.
The corresponding core curves $\gamma_1, \ldots, \gamma_n$ of Hopf bands
give a basis of the first homology group of the associated fibre surface $\Sigma$ (cf.~\cref{checkopenbooks}).
With respect to this basis, one finds the following matrices:
\begin{itemize}
\item The intersection form of $\Sigma$ has the antisymmetric Gram matrix $B$ given by $B_{ij} = 1$ if there is an edge $v_i \to v_j$, and $B_{ij} = 0$ if there is no edge between $v_i$ and $v_j$.
\item The Dehn twist along $\gamma_k$ acts on $H_1(\Sigma)$ by the matrix $S^k$ with $S^k_{ij}$ equal to $1$ if $i = j$, equal to $B_{ij}$ for $i = k \neq j$, and equal to $0$ otherwise.
\item The monodromy acts on $H_1(\Sigma)$ by the matrix $S = S^n\cdot\ldots\cdot S^1$.%
\item If $(S - \mathbbm{1})$ is invertible (e.g.\ for a knot), then $\Sigma \subset S^3$
has Seifert matrix $A = B\cdot (S - \mathbbm{1})^{-1}$.
\end{itemize}
\end{proposition}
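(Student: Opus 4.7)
The plan is to verify the four bullets in sequence, drawing on the construction of \(\Sigma\) from \cref{checkopenbooks} and standard identities relating intersection forms, Dehn twists, and Seifert matrices. First, I would observe that \(\gamma_1, \ldots, \gamma_n\) form a basis of \(H_1(\Sigma)\): the proof of \cref{graphthm} at the end of \cref{sec:thm2} identifies \(\Sigma\) as \(n\) successive positive Hopf stabilisations of \((D^2, \mathrm{id})\), so \(b_1(\Sigma) = n\); the \(\gamma_i\) are precisely the core curves of the Hopf bands added at each stabilisation, so they span, and an \(n\)-element spanning set of \(\Z^n\) is automatically a basis.

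The first bullet is a restatement of the gluing convention of \cref{checkopenbooks}: by \cref{positiveintersection}, the oriented edge \(v_i \to v_j\) is defined exactly so that \(\gamma_i \cdot \gamma_j = +1\); antisymmetry of the intersection pairing and the fact that simple closed curves have zero self-intersection then produce the matrix \(B\). The second bullet is the Picard--Lefschetz formula: a positive Dehn twist along \(\gamma_k\) acts on \(H_1(\Sigma)\) by \(x \mapsto x + (x \cdot \gamma_k)\gamma_k\), so \(T_{\gamma_k}(\gamma_j) = \gamma_j + B_{jk}\gamma_k = \gamma_j - B_{kj}\gamma_k\); expanding in the basis yields the matrix \(S^k\). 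The third bullet follows from \cref{uniqueconjugacyclass}: the checkerboard monodromy is represented by the product \(T_{v_1} T_{v_2} \cdots T_{v_n}\) of positive Dehn twists, and with the convention that composition acts right-to-left on column vectors, the corresponding matrix is \(S = S^n \cdot S^{n-1} \cdots S^1\).

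The substance is in the fourth bullet. The strategy is to combine the antisymmetry relation \(A^T - A = B\) with the classical monodromy identity \(AS = A^T\). For antisymmetry, pushing \(\gamma_i\) to the \(-\)-side of \(\Sigma\) changes the linking number with \(\gamma_j\) by minus the signed intersection \(\gamma_i \cdot \gamma_j\), so \(\mathrm{lk}(\gamma_i^-, \gamma_j) = A_{ij} + B_{ij}\); on the other hand \(\mathrm{lk}(\gamma_i^-, \gamma_j) = \mathrm{lk}(\gamma_j^+, \gamma_i) = A_{ji}\), since pushing \(\gamma_j\) up and \(\gamma_i\) down produces mutually disjoint perturbations. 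For the monodromy identity, \((AS)_{ij} = \mathrm{lk}(\gamma_i^+, h(\gamma_j))\); using the mapping-torus description \(S^3 \setminus L \cong (\Sigma \times \R)/\bigl((x, 2\pi) \sim (h(x), 0)\bigr)\), the curves \(h(\gamma_j) \times \{0\}\) and \(\gamma_j \times \{2\pi\}\) coincide in \(S^3 \setminus L\), and isotoping the latter down through levels in \((\epsilon, 2\pi]\) (all disjoint from \(\gamma_i \times \{\epsilon\}\)) to a level just above \(\epsilon\) identifies the linking number with \(\mathrm{lk}(\gamma_i, \gamma_j^+) = A_{ji} = A^T_{ij}\). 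Combining, \(A(S - \mathbbm{1}) = AS - A = A^T - A = B\), and invertibility of \((S - \mathbbm{1})\) (forced for a knot by the Wang sequence of the fibration \(S^3 \setminus L \to S^1\), since \(H_1(S^3 \setminus L) = \Z\)) yields \(A = B(S - \mathbbm{1})^{-1}\).

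The main obstacle is sign-bookkeeping in the derivations of \(A^T - A = B\) and \(AS = A^T\): the conventions for the orientation of \(\Sigma\), the sign in the Picard--Lefschetz formula, and the order of Dehn-twist composition all have to be set consistently, and small errors in any of them only show up at the very end. A good safeguard is to check the final formula against a minimal example such as the positive trefoil, whose checkerboard graph is a single oriented edge, and where all matrices \(B\), \(S^k\), \(S\), and \(A\) can be computed explicitly by hand.
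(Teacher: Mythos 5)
Your proposal is correct and follows essentially the same route as the paper: the first three bullets are read off from the construction and Picard--Lefschetz, and the Seifert matrix formula is obtained by combining the classical identities $B = A^{\top} - A$ and $S = A^{-1}A^{\top}$ to get $A(S-\mathbbm{1}) = B$. The only difference is that you prove these two identities from scratch (push-offs and the mapping-torus picture), whereas the paper simply cites them as well known, following Levine.
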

\begin{proof}
The formulae for the matrix of the intersection form and of the monodromy are evident from the construction of $\Sigma_{\Gamma}$.
For the matrices of Dehn twists, note that a twist along $\gamma_i$ changes precisely the homology classes of those $\gamma_j$ that intersect $\gamma_i$ non-trivially.
The formula for the matrix of the Seifert form is taken from \cite{Le}; it follows from the well-known relationships $B = A^{\top} - A$ and $S =  A^{-1}A^{\top}$.
\end{proof}
\begin{figure}[p]
\newcommand{\myinclude}[1]{\raisebox{-.45\height}{\includegraphics[scale=.32]{figs/#1.pdf}}}
\newcommand{\smallspace}{\rule{0.1em}{0pt}}
{
$g=1,2,3:$\qquad\makebox{\parbox{3em}{\raggedleft$3_1$ {\footnotesize$T(2,3)$\smallspace}}\myinclude{3_1}}
\makebox{\parbox{3em}{\raggedleft$5_1$ {\footnotesize$T(2,5)$\smallspace}}\myinclude{5_1}}
\makebox{\parbox{3em}{\raggedleft$7_{1}$ {\footnotesize$T(2,7)$\smallspace}}\myinclude{7_1}}
\makebox{\parbox{3em}{\raggedleft$8_{19}$ {\footnotesize$T(3,4)$\smallspace}}\myinclude{8_19}}\\[.45\baselineskip]
$g=4:$\qquad\makebox{\parbox{3.2em}{\raggedleft$9_{1}$ {\footnotesize$T(2,9)$\smallspace}}\myinclude{9_1}}
\makebox{\parbox{3em}{\raggedleft$10_{124}$ {\footnotesize$T(3,5)$\smallspace}}\myinclude{10_124}}
\makebox{$10_{139}$\myinclude{10_139}}
\makebox{$10_{152}$\myinclude{10_152}}
\makebox{$11n_{77}$\myinclude{11n_77}}\\[.45\baselineskip]
$g=5:$\qquad
\makebox{\parbox{3.6em}{\raggedleft$11a_{367}$ {\footnotesize$T(2,11)$\smallspace}}\myinclude{11a0367}}\quad 
\parbox{3.5em}{\myinclude{12n0242}\\$12n_{242}$} \medskip
\parbox{3.5em}{\myinclude{12n0472}\\$12n_{472}$}
\parbox{3.5em}{\myinclude{12n0574}\\$12n_{574}$}
\fbox{%
\parbox{3.5em}{\myinclude{12n0679}\\$12n_{679}$}
\parbox{3.5em}{\myinclude{m12n0679}\\$-12n_{679}$}}
\fbox{%
\parbox{3.5em}{\myinclude{12n0688}\\$12n_{688}$}
\parbox{3.5em}{\myinclude{m12n0688}\\$-12n_{688}$}}
\parbox{3.5em}{\myinclude{12n0725}\\$12n_{725}$}
\parbox{3.5em}{\myinclude{12n0888}\\$12n_{888}$} \medskip
\fbox{%
\parbox{3.5em}{\myinclude{13n0241}\\$13n_{241}$}
\parbox{3.5em}{\myinclude{13n0300}\\$13n_{300}$}
\parbox{3.5em}{\myinclude{m13n0300}\\$-13n_{300}$}}
\parbox{3.5em}{\myinclude{13n0604}\\$13n_{604}$}
\fbox{%
\parbox{3.5em}{\myinclude{13n0981}\\$13n_{981}$}
\parbox{3.5em}{\myinclude{13n1104}\\$13n_{1104}$}
\parbox{3.7em}{\myinclude{m13n1104}\\$-13n_{1104}$}}
\parbox{3.5em}{\myinclude{13n1176}\\$13n_{1176}$} \medskip
\fbox{%
\parbox{3.5em}{\myinclude{13n1291}\\$13n_{1291}$}
\parbox{3.1em}{\myinclude{13n1320}\\$13n_{1320}$}}
\parbox{4.5em}{\myinclude{13n2405}\\$13n_{2405}$}
\parbox{4.5em}{\myinclude{13n4587}\\$13n_{4587}$}
\parbox{4.5em}{\myinclude{13n5016}\\$13n_{5016}$}
\parbox{4.5em}{\myinclude{nonbraid}\\[1ex]$14n_{5644}$}
}\\[\baselineskip]
\caption{\small All positive braid knots and all weight two arborescent knots with genus five or less, including all such knots with twelve or fewer crossings.
Only the edge orientations which matter for the knot type are drawn.
All knots on this page except the last three are both positive braids and weight two arborescent.
Boxes contain groups of mutant knots.}
\label{fig:table}
\end{figure}

To exclude that the knot is associated with a checkerboard tree, one could use a similar brute-force argument.
However, we have a more conceptual obstruction at our disposal: the signature of the knot is 8,
and we claim that a weight two arborescent knot of genus 6 has signature 10 or 12.
Indeed, let $\Gamma'$ be a checkerboard tree. Let $(v_1,v_2)$ be an edge of $\Gamma'$ and denote the components of $\Gamma'\setminus (v_1,v_2)$ by $\Gamma'_1, \Gamma'_2$ with $v_i \in \Gamma'_i$.
Suppose neither $\Gamma'_1$ nor $\Gamma'_2$ has signature defect, i.e.\ the symmetrised Seifert form restricted to the subspace
of homology generated by $\Gamma'_i$ is positive definite. Then $\Gamma'$ has signature at least 10.
The case remains that at least one of $\Gamma'_1, \Gamma'_2$ has defect, w.l.o.g.\ take~$\Gamma'_1$.
Trees with five or less vertices have no defect, and there are precisely two trees with six vertices and defect (cf.~\cite{Ba}).
So if $\Gamma'_2$ has defect, too, then $\Gamma'_1$ and $\Gamma'_2$ must both be equal to one of two possible trees with six vertices;
but those trees correspond to links with three and five components, respectively, and their plumbing cannot yield a knot.
So $\Gamma'_2$ cannot have defect. Now assume that $(v_1, v_2)$ has been chosen such that $\Gamma'_1$ has the minimal number of vertices.
Then $\Gamma'_1 \setminus v_1$ has no defect, and so $\Gamma' \setminus v_1$ does not have defect, either. Thus $\Gamma\setminus v$ has signature at least $10$.

\Cref{fig:table} shows all positive braid knots and all weight two arborescent knots (for that class, such a list is much easier to compile than for braids)
with genus up to 5.
Let us give some details on how it was obtained. A positive braid knot $K$ of genus $g$
can be written as the closure of a positive braid word on $n$ strands for some $n$, with the property that
all generators $\sigma_1, \ldots, \sigma_{n-1}$ appear at least twice.
It follows that $n \leq 2g$, which means that there is only a finite number of braid words one has to consider
to find all braid knots of a fixed genus. Using a computer, one may iterate through all these words, and use
knotscape to identify the knots. To save time, one may restrict oneself to a smaller set of braid words, e.g.\ using
conjugation, one may suppose that every word begins with $\sigma_1$.
We compiled a list of prime positive braid knots of genus 6 or less, which will be made available on the second author's homepage.
Such a list has also been compiled by Stoimenow. The two lists are in complete agreement.\footnote{Personal communication, June 22, 2017.}

\section{Perspectives}
\label{sec:perspectives}

The classification of arborescent links allows for a complete understanding of when two plane trees define the same link~\cite{BS} (see also~\cite{Ge}). Namely, two unoriented trees give the same positive fibred link if and only if they are related by the move described in \cref{cor:halfreverse}.
We are looking for a refined move in the oriented setting
to distinguish oriented isotopy classes of checkerboard tree links.
\begin{problem} Are any two checkerboard trees with isotopic associated oriented links related by the reverse-and-reflect move of \cref{cor:move}?
\end{problem} \noindent
One may check on the basis of \cref{fig:table} that this does indeed hold for checkerboard trees whose associated links are knots of genus $5$ or less.
If it did hold in general, as a consequence one could decide the invertibility of a weight two arborescent link, and more generally find and distinguish its weight two arborescent mutants,
just by considering the combinatorics of its checkerboard tree. Stoimenow \cite{Stoi}, on the other hand, has found mutant positive braid knots,
such that the mutation cannot be seen directly from a positive  braid representation. However, these knots (16n$_{93564}$ and 16n$_{179454}$) are also weight two arborescent,
and the mutation is indeed visible from their checkerboard trees. This is a case in point that some properties of positive braid knots are more easily visible from their linking graphs
than from their braid representations.

The situation is more complicated for connected checkerboard graphs which are not trees --
 there are pairs of checkerboard graphs that are different, even disregarding orientation and embedding, with isotopic links.
One reason for that is braid conjugation, which has a somewhat mysterious effect on linking graphs, but does not change the link type. For example, the linking graph associated with the two conjugate braids $\sigma_1 \sigma_2 \sigma_1^n \sigma_2 \sigma_1$ and $\sigma_1^2 \sigma_2 \sigma_1^n \sigma_2$ is a cycle and a tree of Dynkin type $D_{n+2}$, respectively. On the other hand, a Markov move has no effect on the linking graph of a positive braid.

\begin{problem} Find a complete set of moves relating connected checkerboard graphs with equivalent links.
\end{problem}

The description of checkerboard graph links being not very explicit, it is hard to localise them in knot tables. Nevertheless, one can say that many of them admit diagrams with positive crossings only, e.g.\ the ones associated with plane trees and positive braid links. We do not know to what extent the classes of checkerboard graph links and positive fibred links coincide.

\begin{problem} Construct natural diagrams for checkerboard graph links. Are they all positive?
\end{problem}

Positive braid links and weight two arborescent links have a variety of common features. Some of these are likely to extend to checkerboard graph links, e.g.\ positivity of the signature invariant. In view of Boileau, Boyer and Gordon's work on L-space knots, it is interesting to classify checkerboard graph links with positive definite Seifert form~\cite{BB}. For positive braid links, this boils down to the classification of simply laced Dynkin diagrams~\cite{Ba}. In the case of positive braid knots, the maximality of the signature invariant is equivalent to the maximality of the topological 4-genus~\cite{Li}.

\begin{problem} Classify checkerboard graph links with maximal signature invariant ($\sigma=2g$) and maximal topological 4-genus ($g_4=g$). Do we recover the simply laced Dynkin diagrams, i.e.\ trees of type $A,D,E$?
\end{problem}

Baker's recent result on concordance of positive braid links extends to checkerboard graph links, since their fibre surfaces are plumbings of positive Hopf bands~\cite{Bk}. In particular, the existence of two non-isotopic, concordant checkerboard graph links would give a counterexample to the Slice-Ribbon conjecture. Our last problem is therefore mildly  provocative.

\begin{problem} \label{p:sr} Find a pair of smoothly concordant, non-isotopic checkerboard graph links.
\end{problem}

\medskip
\noindent
Universit\"at Bern, Sidlerstrasse 5, CH-3012 Bern, Switzerland

\medskip
\noindent
\myemail{sebastian.baader@math.unibe.ch},
\myemail{lukas.lewark@math.unibe.ch},

\noindent
\myemail{livio.liechti@math.unibe.ch}

\end{document}